\renewcommand\appendix{\par
  \setcounter{section}{0}
   \renewcommand\thesection{Appendix \Alph{section}.}
 }
\newtheorem{theorem}{Theorem}[section]
\newtheorem{lemma}{Lemma}[section]
\newtheorem{proposition}{Proposition}[section]
\newtheorem{remark}{Remark}[section]
\newcommand{\R}{{\mathbb R}}
\newcommand{\T}{{\mathbb T}}
\newcommand{\PP}{{\mathbb P}}
\def\duno{\partial_1}
\def\ddue{\partial_2}
\def\dtre{\partial_3}
\def\dt{\partial_t}
\def\div{{\rm div}\, }
\def\H{\mathcal H }
\def\j{\mathfrak J}
\def\ds{\displaystyle}
\newcommand\blfootnote[1]{%
  \begingroup
  \renewcommand\thefootnote{}\footnote{#1}%
  \addtocounter{footnote}{-1}%
  \endgroup
}
\begin{document}
\title{\bf On well-posedness of the plasma-vacuum interface problem: the case of non-elliptic interface symbol}
%\subtitle{Do you have a subtitle?\\ If so, write it here}
\author{{\bf Yuri Trakhinin}\\
Sobolev Institute of Mathematics, Koptyug av. 4, 630090 Novosibirsk, Russia\\
and\\
Novosibirsk State University, Pirogova str. 2, 630090 Novosibirsk, Russia}

\date{
%Received: date / Accepted: date
}
% The correct dates will be entered by Springer
%
% Add name of the expert who has communicated your paper
%\communicated{name}
%
\maketitle
\begin{abstract}
We consider  the plasma-vacuum interface problem in a classical statement when in the plasma region the flow is governed by the equations of ideal compressible magnetohydrodynamics, while in the vacuum region the magnetic field obeys the div-curl system of pre-Maxwell dynamics. The local-in-time existence and uniqueness of the solution to this problem in suitable anisotropic Sobolev spaces was proved in \cite{ST}, provided that at each point of the initial interface the plasma density is strictly positive and the magnetic fields on either side of the interface are not collinear. The non-collinearity condition appears as the requirement that the symbol associated to the interface is elliptic. We now consider the case when this symbol is not elliptic and study the linearized problem, provided that the unperturbed plasma and vacuum non-zero magnetic fields are collinear on the interface. We prove a basic a priori $L^2$ estimate for this problem under the (generalized) Rayleigh-Taylor sign condition $[\partial q/\partial N]<0$ on the jump of the normal derivative of the unperturbed total pressure satisfied at each point of the interface. By constructing an Hadamard-type ill-posedness example for the frozen coefficients linearized problem we show that the simultaneous  failure of the non-collinearity condition and the
Rayleigh-Taylor sign condition leads to Rayleigh-Taylor instability.
\end{abstract}

%\footnotemark

\blfootnote{AMS subject classifications: 76W05, 35L50, 35R35, 35J46}
%\footnotetext[0]{text}

%
\section{Introduction}
\label{s1}

We consider the equations of ideal compressible magnetohydrodynamics (MHD):
\begin{equation}
\left\{
\begin{array}{l}
\partial_t\rho  +{\rm div}\, (\rho {v} )=0,\\[3pt]
\partial_t(\rho {v} ) +{\rm div}\,(\rho{v}\otimes{v} -{H}\otimes{H} ) +
{\nabla}q=0, \\[3pt]
\partial_t{H} -{\nabla}\times ({v} {\times}{H})=0,\\[3pt]
\partial_t\bigl( \rho e +\frac{1}{2}|{H}|^2\bigr)+
{\rm div}\, \bigl((\rho e +p){v} +{H}{\times}({v}{\times}{H})\bigr)=0,
\end{array}
\right.
\label{1}
\end{equation}
where $\rho$ denotes density, $v\in\mathbb{R}^3$ plasma velocity, $H \in\mathbb{R}^3$ magnetic field, $p=p(\rho,S )$ pressure, $q =p+\frac{1}{2}|{H} |^2$ total pressure, $S$ entropy, $e=E+\frac{1}{2}|{v}|^2$ total energy, and  $E=E(\rho,S )$ internal energy. With a state equation of plasma, $\rho=\rho(p ,S)$, and the first principle of thermodynamics, \eqref{1} is a closed system.

System (\ref{1}) is supplemented by the divergence constraint
\begin{equation}
{\rm div}\, {H} =0
\label{2}
\end{equation}
on the initial data. As is known, taking into account \eqref{2}, we can easily symmetrize system \eqref{1} by rewriting it in the nonconservative form
\begin{equation}
\left\{
\begin{array}{l}
{\displaystyle\frac{\rho_p}{\rho}}\,{\displaystyle\frac{{\rm d} p}{{\rm d}t} +{\rm div}\,{v} =0},\qquad
\rho\, {\displaystyle\frac{{\rm d}v}{{\rm d}t}-({H}\cdot\nabla ){H}+{\nabla}
q  =0 },\\[9pt]
{\displaystyle\frac{{\rm d}{H}}{{\rm d}t} - ({H} \cdot\nabla ){v} +
{H}\,{\rm div}\,{v}=0},\qquad
{\displaystyle\frac{{\rm d} S}{{\rm d} t} =0},
\end{array}\right. \label{3}
\end{equation}
where $\rho_p=\partial\rho/\partial p$ and ${\rm d} /{\rm d} t =\partial_t+({v} \cdot{\nabla} )$. System \eqref{3} is symmetric for the unknown $(p,v,H,S)$. A different symmetrization can be obtained if as the unknown we fix the vector
\[
 U =(q, v,H, S).
\]
Indeed, in terms of $q$ the equation for the pressure in \eqref{3} takes the form
\begin{equation}
\begin{array}{ll}\label{equq}
\displaystyle\frac{\rho_p}{\rho }\left\{\frac{{\rm d} q}{{\rm d}t} -H
\cdot\displaystyle\frac{{\rm d} H}{{\rm d}t} \right\}+{\rm div}\,{v}=0,
\end{array}
\end{equation}
where it is understood that now
$\rho =\rho(q  -|H  |^2/2,S)$ and similarly for $\rho_p $. We then derive ${\rm div}\,{v} $ from \eqref{equq} and rewrite the equation
for the magnetic field in \eqref{3} as
\begin{equation}
\begin{array}{ll}\label{equH}
\displaystyle\frac{{\rm d} H}{{\rm d}t} -(H \cdot\nabla)v  -
 \frac{\rho_p}{\rho }H\left\{\frac{{\rm d} q}{{\rm d}t} -H
\cdot\frac{{\rm d} H}{{\rm d}t} \right\}=0.
\end{array}
\end{equation}
Substituting \eqref{equq}, \eqref{equH} in \eqref{3} then gives the following symmetric system
\begin{equation}
\begin{array}{ll}\label{3'}
\left(\begin{matrix}
{\rho_p/\rho}&\underline
0&-({\rho_p/\rho})H &0 \\
\underline 0^T&\rho
I_3&0_3&\underline 0^T\\
-({\rho_p/\rho})H^T&0_3&I_3+({\rho_p/\rho})H\otimes H&\underline 0^T\\
0&\underline 0&\underline 0&1
\end{matrix}\right)\dt U+\\
 \\
 +
\left( \begin{matrix}
(\rho_p/\rho)
v \cdot\nabla&\nabla\cdot&-({\rho_p/\rho})Hv \cdot\nabla&0\\
\nabla&\rho v \cdot\nabla I_3&-H \cdot\nabla I_3&\underline 0^T\\
-({\rho_p/\rho})H^T v \cdot\nabla&-H \cdot\nabla I_3&(I_3+({\rho_p/\rho})H\otimes H)
v \cdot\nabla&\underline 0^T\\
0&\underline 0&\underline 0&v \cdot\nabla
\end{matrix}\right)
U
=0\,,
\end{array}
\end{equation}
where $\underline 0=(0,0,0)$.
We write system (\ref{3'}) in the form
\begin{equation}
\label{4}
A_0(U )\partial_tU+\sum_{j=1}^3A_j(U )\partial_jU=0,
\end{equation}
which is symmetric hyperbolic provided the hyperbolicity condition $A_0>0$ holds:
\begin{equation}
\rho  >0,\quad \rho_p >0. \label{5}
\end{equation}
Note that $c=1/\sqrt{\rho_p}$ is the sound speed.

Plasma-vacuum interface problems for system \eqref{1} appear in the mathematical modeling of plasma confinement by magnetic fields. This subject is very popular since the 1950's, but most of theoretical studies were devoted to finding stability criteria of equilibrium states. The typical work in this direction is the famous paper of Bernstein et. al. \cite{BFKK} where the plasma-vacuum interface problem was considered in its classical statement modeling the plasma confined inside a perfectly conducting rigid wall and isolated from it by a vacuum region. In this statement (see also, e.g., \cite{Goed}) the plasma is described by the MHD equations \eqref{1} whereas in the vacuum region one considers the so-called {\it pre-Maxwell dynamics}
\begin{equation}
\nabla \times \mathcal{H} =0,\qquad {\rm div}\, \mathcal{H}=0\label{6}
\end{equation}
describing the vacuum magnetic field $\mathcal{H}\in\mathbb{R}^3$. That is, as in the nonrelativistic MHD system \eqref{1}, in the vacuum region after neglecting the displacement current the electric field becomes a secondary variable that may be  computed from the magnetic field.

The classical statement \cite{BFKK,Goed} of the plasma-vacuum problem for systems \eqref{1}/\eqref{4} and \eqref{6} is closed by the boundary conditions
\begin{subequations}
\begin{align}
& \frac{{\rm d}F }{{\rm d} t}=0,\quad [q]=0,\quad  H\cdot N=0 \label{7a}\\
& \mathcal{H}\cdot N=0, \label{7b}
\end{align}
\label{7}
\end{subequations}
on the interface $\Gamma (t)=\{F(t,x)=0\}$ and the initial data
\begin{equation}
\label{8}
\begin{array}{ll}
{U} (0,{x})={U}_0({x}),\quad {x}\in \Omega^{+} (0),\qquad
F(0,{x})=F_0({x}),\quad {x}\in\Gamma(0) , \\
\mathcal{H}(0,x)=
\mathcal{H}_0(x),\quad {x}\in \Omega^{-}(0),
\end{array}
\end{equation}
for the plasma variable $U$, the vacuum magnetic field $\mathcal{H}$ and the function $F$, where $\Omega^+(t)$ and $\Omega ^-(t)$ are space-time domains occupied by the plasma and the vacuum respectively, $N=\nabla F$, and $[q]= q|_{\Gamma}-\frac{1}{2}|\mathcal{H}|^2_{|\Gamma}$ denotes the jump of the total pressure across the interface. The first condition in \eqref{7} means that the interface moves with the velocity of plasma particles at the boundary and since $F$ is an unknown, problem \eqref{4}, \eqref{6}--\eqref{8} is a free-boundary problem. Moreover, in the plasma confinement problem both the plasma and vacuum regions are bounded domains, and at the perfectly conducting rigid wall $\Sigma$ which is the exterior boundary of the vacuum region $\Omega^-(t)$ one states the standard boundary condition $(\mathcal{H}, n)=0$, where $n$ is a normal vector to $\Sigma$.

Until recently, there were no well-posedness results for full (non-stationary) plasma-vacuum models. A basic energy a priori estimate in Sobolev spaces for the linearization of the plasma-vacuum problem \eqref{4}, \eqref{6}--\eqref{8} was first derived in \cite{Tjde}, provided that the  {\it non-collinearity condition}
\begin{equation}
|{H}\times{\mathcal{H}}|_{\Gamma}\geq \delta >0
\label{non-coll}
\end{equation}
($\delta$ is a fixed constant) stating that the magnetic fields on either side of the interface are not collinear holds for a basic state about which one linearizes the problem. The existence of solutions to the linearized problem was then proved in \cite{ST1}.  It was assumed in \cite{ST1,Tjde} that the hyperbolicity conditions \eqref{5} are satisfied in $\Omega^+$ up to the boundary $\Gamma$, i.e. the density does not go to zero continuously, but has a jump (clearly, in the vacuum region $\Omega^-$ the density is identically zero). It is noteworthy that this assumption is compatible with the continuity of the total pressure in \eqref{7a} that allows the pressure $p$ to be positive on the interface for a non-zero $\mathcal{H}|_{\Gamma}$.

In \cite{ST1,Tjde}, for technical simplicity the moving interface  $\Gamma (t)$ was assumed to have the form of a graph $F=x_1-\varphi (t,x')$, $x'=(x_2,x_3)$, i.e. both the plasma and vacuum domains are unbounded. However, as was noted in the subsequent paper \cite{ST}, this assumption is not suitable in a pure form for the original nonlinear free boundary problem \eqref{4}, \eqref{6}--\eqref{8} because in that case the vacuum region $\Omega^-(t)=\{x_1<\varphi (t,x')\}$ is a simply connected domain.  Indeed, the elliptic problem \eqref{6}, \eqref{7b} has then only the trivial solution $\mathcal{H}=0$, and the whole problem is reduced to solving the MHD equations \eqref{1} with a vanishing total pressure $q$ on $\Gamma (t)$.

The technically difficult case of non-simply connected vacuum regions was postponed in \cite{ST} to a future work. Instead of this,
the plasma-vacuum system was assumed in \cite{ST} to be not isolated from the outside world due to a given surface current on the fixed boundary of the vacuum region that forces oscillations. Namely, following \cite{ST}, let us assume that the space domain $\Omega$ occupied by plasma and vacuum is given by
\[
\Omega :=\{x\in\R^3 \; |  \, x_1\in(-1,1),\; x' \in \T^2\}  ,
\]
where $\T^2$ denotes the $2$-torus, which can be thought of as the unit square with periodic boundary conditions. We also set that the moving interface $\Gamma(t) $ is given by
$$
\Gamma(t) := \{ (x_1,x') \in \R \times \T^2  \, , \, x_1=\varphi(t,x')\}, \qquad t \in [0,T]\, ,
$$
where it is assumed that $-1<\varphi(t,\cdot)<1$. Then $\Omega^\pm(t)=\{x_1\gtrless \varphi(t,x')\}\cap\Omega$ are the plasma and vacuum domains respectively. On the fixed {\it top} and {\it bottom} boundaries $$\Gamma_\pm := \{ (\pm 1,x') \, , \, x' \in \T^2 \}$$ of the domain $\Omega$, we prescribe the boundary conditions \cite{ST}
\begin{equation}
\label{9}
v_1=H_1=0 \quad {\rm on } \; [0,T] \times \Gamma_+ \,, \qquad
\nu\times\H=\j \quad {\rm on } \; [0,T] \times \Gamma_- \, ,
\end{equation}
where ${\nu}=(-1,0,0)$ is the outward normal vector at $\Gamma_-$ and  $\j$ represents a given surface current which forces oscillations onto the plasma-vacuum system. In laboratory plasmas this external excitation may be caused by a system of coils. This model can also be exploited for the analysis of waves in astrophysical plasmas, e.g., by mimicking the effects of excitation of MHD waves by an external plasma by means of a localized set of ``coils'', when the response of the internal plasma is the main issue (see a more complete discussion in \cite{Goed}).

Note that with the above parametrization of $\Gamma (t)$, an equivalent formulation of the boundary conditions \eqref{7} at the free interface is
\begin{equation}
\partial_t\varphi =v_N,\quad [q]=0,\quad H_N=0,\quad \mathcal{H}_N=0 \quad \mbox{on}\ \Gamma (t),\label{7'}
\end{equation}
and the initial data read
\begin{equation}
\label{8'}
\begin{array}{ll}
{U} (0,{x})={U}_0({x}),\quad {x}\in \Omega^{+} (0),\qquad
\varphi (0,{x})=\varphi_0({x}),\quad {x}\in\Gamma(0) , \\
\mathcal{H}(0,x)=
\mathcal{H}_0(x),\quad {x}\in \Omega^{-}(0),
\end{array}
\end{equation}
where $v_N=v\cdot N$, $H_N=H\cdot N$, $\mathcal{H}_N=\mathcal{H}\cdot N$, $N=(1,-\partial_2\varphi ,-\partial_3\varphi )$.

Basing on the results of \cite{ST1,Tjde} for the linearized problem, under the non-collinearity condition \eqref{non-coll} satisfied at each point of the initial interface the existence and uniqueness of the solution to the nonlinear plasma-vacuum interface problem \eqref{4}, \eqref{6}, \eqref{9}--\eqref{8'} in suitable anisotropic Sobolev spaces was recently proved in \cite{ST} by a Nash–-Moser-type iteration.

The non-collinearity condition appears in \cite{ST1,ST,Tjde} as the requirement that the symbol associated to the interface is elliptic, i.e. we can resolve the boundary conditions for the gradient $(\partial_t\varphi , \partial_2\varphi , \partial_3\varphi ) $ (see Section \ref{s4} for a detailed discussion). In this paper we study the linearization of problem \eqref{4}, \eqref{6}, \eqref{9}--\eqref{8'} for the case when this symbol is {\it not elliptic}, i.e. when the non-collinearity condition \eqref{non-coll} fails for the basic state (``unperturbed flow") about which we linearize the problem. For this case our main goal is to derive a basic a priori estimate for the linearized problem with variable coefficients under suitable assumptions satisfied for the basic state. We show that the principal assumption is the generalized Rayleigh-Taylor sign condition\footnote{We call it generalized because, unlike the classical Rayleigh-Taylor sign condition, it is for the total pressure $q$ but not for the hydrodynamical pressure $p$. Below we  drop the word ``generalized'' for brevity.}
\begin{equation}
\left[\frac{\partial q}{\partial N}\right]\leq -\epsilon <0
\label{RT}
\end{equation}
on the jump of the normal derivative of the total pressure, where
$$
\left[\frac{\partial q}{\partial N}\right]=\left.\frac{\partial q}{\partial N}\right|_{\Gamma} -\frac{\partial }{\partial N} \left.\left(\frac{1}{2}|\mathcal{H}|^2\right)\right|_{\Gamma}=\left.\frac{\partial q}{\partial N}\right|_{\Gamma}-
\left.\left(\mathcal{H}\cdot\frac{\partial\mathcal{H}}{\partial N}\right)\right|_{\Gamma}
$$
and $\epsilon$ is a fixed constant. Under the fulfilment of condition \eqref{RT} for the basic state we manage to prove a basic a priori $L^2$ estimate for the linearized problem, provided that the unperturbed plasma and vacuum non-zero magnetic fields are collinear on the interface. To be exact, here we mean the straightened interface $\Gamma :=\Omega\cap\{x_1=0\}$
because below, as in \cite{ST1,ST,Tjde}, we reduce the free boundary problem to that in the fixed domains $\Omega^\pm := \Omega \cap \{ x_1\gtrless 0 \}$.

The violation of the non-collinearity condition \eqref{non-coll}  for the basic state means that the unperturbed plasma and vacuum magnetic fields become collinear somewhere on the (straightened) interface (in some regions of $\Gamma$ and/or even in some isolated points of $\Gamma$). That is, our assumption that the unperturbed plasma and vacuum non-zero magnetic fields are collinear at each point of $\Gamma$ does not cover the general case whose consideration is necessary for the usage of a priori estimates obtained for the linearized problem for the proof of existence of solutions to the nonlinear problem. On the other hand, our result is an important and necessary step towards the proof of the well-posedness of the linearized plasma-vacuum problem for which the basic state satisfies the Rayleigh-Taylor sign condition \eqref{RT} (without any restrictions on the relative position of the unperturbed plasma and vacuum magnetic fields on $\Gamma$; see Section \ref{s5} for a further discussion of open problems).

It is interesting to note that in MHD the Rayleigh-Taylor sign condition appears for contact discontinuities in \cite{MTT-cont} in its classical (purely hydrodynamical) form $[\partial p/\partial N]\leq -\epsilon <0$, i.e. as a condition for the pressure $p$ but not for the total pressure $q$ as in \eqref{RT}. As for our problem in the case when the non-collinearity condition \eqref{non-coll} fails, for MHD contact discontinuities the front symbol is also not elliptic and the Rayleigh-Taylor sign condition appears as a condition for the basic state sufficient for the well-posedness of the linearized problem with variable coefficients.

We can also mention two classical examples of free boundary problems with non-elliptic symbol associated with a free boundary. The first one is the free boundary problem for the incompressible  Euler equations with the vacuum boundary condition $p|_{\Gamma}=0$ (see \cite{Lind_incomp} and references therein). The second example is the counterpart of this problem for the compressible Euler equations describing the motion of a compressible perfect liquid (with $\rho|_{\Gamma}>0$) in vacuum \cite{Lind,Tcpam}. The local-in-time existence in Sobolev spaces was managed to be proved in \cite{Lind_incomp,Lind,Tcpam} for the nonlinear free boundary problem only under  the Rayleigh-Taylor sign condition $(\partial p/\partial N)|_{\Gamma}\leq -\epsilon <0$ satisfied at the first moment. As is known, the violation of the Rayleigh-Taylor sign condition for these problems is associated with {\it Rayleigh-Taylor instability} occurring on the level of variable coefficients of the linearized problem. Moreover, for the case of incompressible liquid Ebin \cite{Ebin}
showed the {\it ill-posedness} (in Hadamard's sense) in Sobolev spaces of the nonlinear problem  when the Rayleigh-Taylor sign condition is not satisfied at the first moment.

In this paper we show that Rayleigh-Taylor instability can be easily detected as ill-posedness  already on the level of frozen (constant) coefficients of the linearized problem, where the frozen coefficients linearized problem results from linearization, passage to Alinhac's unknown \cite{Al} (see Section \ref{s2}) and freezing the coefficients at a point of the boundary. We first show this for the above mentioned free boundary problems for the incompressible  and compressible Euler equations. Then, we do the same for our plasma-vacuum problem. More precisely, we prove that one can construct an Hadamard-type ill-posedness example for the frozen coefficients linearized problem {\it if and only if} the non-collinearity condition \eqref{non-coll} and the Rayleigh-Taylor sign condition \eqref{RT} simultaneously fail for frozen coefficients. Note that simple calculations of the corresponding normal modes analysis clarify the physical sense of the non-collinearity condition \eqref{non-coll} appearing in \cite{ST1,ST,Tjde} as the ``mathematical" requirement of the ellipticity of the interface symbol. Roughly speaking, the physical sense is the following: if the plasma and vacuum magnetic fields are not collinear, then they cannot be both collinear with the wave vector (see Remark \ref{r4.1} for more details).

The result of \cite{ST} and that of this paper suggest a hypothesis that for the plasma-vacuum interface problem \eqref{4}, \eqref{6}, \eqref{9}--\eqref{8'} one can prove the local-in-time existence and uniqueness theorem in suitable Sobolev spaces, provided that the Rayleigh-Taylor sign condition \eqref{RT} is satisfied at all those points of the initial interface $\Gamma (0)$ where the non-collinearity condition \eqref{non-coll} fails. In the end of this paper (see Section \ref{s5}) we discuss open problems appearing towards the proof of such an important theorem.

The rest of the paper is organized as follows. In Section \ref{s2}, we reduce the free boundary problem \eqref{4}, \eqref{6}, \eqref{9}--\eqref{8'} to an initial-boundary value problem in fixed domains, obtain the linearized problem and formulate our main results
for it which are Theorem \ref{t2.1} about the basic a priori $L^2$ estimate and Theorem \ref{t2.2} about ill-posedness for frozen coefficients. In Sections \ref{s3} and \ref{s4} we prove Theorems \ref{t2.1} and \ref{t2.2} respectively. At last, in Section \ref{s5} we discuss open problems.

\section{Linearized problem and main results}
\label{s2}

\subsection{Reduced nonlinear problem in fixed domains}

Before linearization we reduce problem \eqref{4}, \eqref{6}, \eqref{9}--\eqref{8'} to that in fixed domains. We straighten the interface $\Gamma (t)$ by using a simple change of independent variables used, for example, in \cite{Met,Tcpam,Tjde}. More precisely, a little modification of this change is necessary to take the presence of the top and bottom boundaries $\Gamma^{\pm}$ into account. The unknowns $U$ and $\mathcal{H}$ being smooth in $\Omega^{\pm}(t)$ are replaced by the vector-functions
\[
\widetilde{U}(t,x ):= {U}(t,{\Phi} (t,x)),\quad
\widetilde{\mathcal{H}}(t,x ):= \mathcal{H}(t,{\Phi} (t,x)).
\]
which are smooth in the fixed domains
\[
\Omega^\pm := \Omega \cap \{ x_1\gtrless 0 \},
\]
and the straightened interface is
\[
\Gamma :=\Omega\cap\{x_1=0\}.
\]
Here
\[
\Phi (t,x ) =(\Phi_1,\Phi_2,\Phi_3 )(t,x ):= ( x_1+\Psi (t,x ),x'),\quad  \Psi(t,x ):= \chi ( x_1)\varphi (t,x'),
\]
$\chi\in C^{\infty}(-1,1)$ equals to 1 on $[-\delta_0,\delta_0]$, where $\delta_0<1$  is a small positive constant, and $\chi (\pm 1 )=0$. Moreover,  if $\|\chi'\|_{L^{\infty}(-1,1)}<\kappa /2$, we assume that $\|\varphi\|_{L^{\infty}([0,T]\times\Gamma )}\leq 1/\kappa $.
This guarantees that the change of variables is admissible because $\partial_1\Phi_1\geq 1/2$.\footnote{If in the local-in-time existence theorem we do not want to assume that the norm $\|\varphi_0\|_{L^{\infty}(\Gamma )}$ of the initial data is small enough (but not just less than 1), we can make a scaling in the definition of $\Phi_1$:  $\Phi_1:=\mu x_1+\Psi$, where $\mu >1$ is large enough.}

Alternatively, we could use the same change of variables as in \cite{ST} inspired by Lannes \cite{Lannes}. This change of variables is the regularization of one half of derivative of the lifting function $\Psi$ with respect to $\varphi$. However, this advantage plays no role for the linearized problem which we are going to study here. On the other hand, the usage of this change of variables was not crucial for the analysis in \cite{ST} and just gave an unimportant gain of one half of derivative in the local-in-time existence theorem for the nonlinear problem \eqref{4}, \eqref{6}, \eqref{9}--\eqref{8'}. Anyway, choosing now the simple change of variables as above, we can postpone the final choice to the nonlinear analysis.

Dropping for convenience tildes in $\widetilde{U}$ and $\widetilde{\mathcal{H}}$, problem \eqref{4}, \eqref{6}, \eqref{9}--\eqref{8'} can be reformulated on the fixed reference domain $\Omega$ as
\begin{equation}
\mathbb{P}(U,\Psi)=0\quad\mbox{in}\ [0,T]\times \Omega^+,\quad \mathbb{V}(\mathcal{H},\Psi)=0\quad\mbox{in}\ [0,T]\times \Omega^-,\label{16}
\end{equation}
\begin{equation}
\mathbb{B}(U,\mathcal{H},\varphi )=\bar\j\quad\mbox{on}\ [0,T] \times(\Gamma^3\times\Gamma_+\times\Gamma_-),\label{17}
\end{equation}
\begin{equation}
(U,\mathcal{H})|_{t=0}=(U_0,\mathcal{H}_0)\quad\mbox{in}\ \Omega^+\times\Omega^-,\qquad \varphi|_{t=0}=\varphi_0\quad \mbox{on}\ \Gamma,\label{18}
\end{equation}
where $\mathbb{P}(U,\Psi)=P(U,\Psi)U$,
\[
P(U,\Psi)=A_0(U)\partial_t +\widetilde{A}_1(U,\Psi)\partial_1+A_2(U )\partial_2+A_3(U )\partial_3,
\]
\[
\widetilde{A}_1(U,\Psi )=\frac{1}{\partial_1\Phi_1}\Bigl(
A_1(U )-A_0(U)\partial_t\Psi -\sum_{k=2}^3A_k(U)\partial_k\Psi \Bigr),
\]
\[
\mathbb{V}(\mathcal{H},\Psi)=\left(
\begin{array}{c}
\nabla\times \mathfrak{H}\\
{\rm div}\,\mathfrak{h}
\end{array}
\right),
\]
\[
\mathfrak{H}=(\mathcal{H}_1\partial_1\Phi_1,\mathcal{H}_{\tau_2},\mathcal{H}_{\tau_3}),\quad
\mathfrak{h}=(\mathcal{H}_{N},\mathcal{H}_2\partial_1\Phi_1,\mathcal{H}_3\partial_1\Phi_1),
\]
\[
\mathcal{H}_{N}=\mathcal{H}_1-\mathcal{H}_2\partial_2\Psi -\mathcal{H}_3\partial_3\Psi,\quad
\mathcal{H}_{\tau_i}=\mathcal{H}_1\partial_i\Psi +\mathcal{H}_i,\quad i=2,3,
\]
\[
\mathbb{B}(U,\mathcal{H},\varphi )=\left(
\begin{array}{c}
\partial_t\varphi -v_{N }\\ {[}q{]} \\ \mathcal{H}_{N }\\v_{1 }  \\ \nu\times\H_{}
\end{array}
\right),\quad [q]=q_{|x_1=0+}-\frac{1}{2}|\mathcal{H}|^2_{x_1=0-}
 ,
\]
\[
v_{N}=v_1- v_2\partial_2\Psi - v_3\partial_3\Psi , \qquad \bar\j=(0,0,0,0,\j)^T.
\]
In \eqref{17} the notation $[0,T] \times(\Gamma^3\times\Gamma_+\times\Gamma_-)$ means that the first three components of this vector equation are taken on $[0,T] \times\Gamma$, the fourth one on $[0,T] \times\Gamma_+$, and the fifth one on $[0,T] \times\Gamma_-$. To avoid an overload of notation we have denoted by the same symbols $v_N,\mathcal{H}_N$ here above and $v_N,\mathcal{H}_N$ as in \eqref{7'}. Notice that
$v_{N |x_1=0}=v_1- v_2\partial_2\varphi - v_3\partial_3\varphi,$ $\mathcal{H}_{N |x_1=0}=
\mathcal{H}_1- \mathcal{H}_2\partial_2\varphi - \mathcal{H}_3\partial_3\varphi $, as in the previous definition in \eqref{7'}.

We did not include in problem \eqref{16}--\eqref{18} the equation
\begin{equation}
{\rm div}\, h=0\quad\mbox{in}\ [0,T]\times \Omega^+,\label{19c}
\end{equation}
and the boundary conditions
\begin{equation}
H_{N}=0\quad\mbox{on}\ [0,T]\times\Gamma, \qquad  H_1=0 \quad {\rm on } \; [0,T] \times \Gamma_+ ,\label{20c}
\end{equation}
where $h=(H_{N},H_2\partial_1\Phi_1,H_3\partial_1\Phi_1)$,
$H_{N}=H_1-H_2\partial_2\Psi -H_3\partial_3\Psi$,
because they are just restrictions on the initial data \eqref{18}. More precisely,  we have the following proposition (see \cite{ST}).

\begin{proposition}
Let the initial data \eqref{18} satisfy \eqref{19c} and \eqref{20c} for $t=0$.
If $(U,\mathcal{H},\varphi )$ is a solution  of problem \eqref{16}--\eqref{18}, then this solution satisfies \eqref{19c} and \eqref{20c} for all $t\in [0,T]$.
\label{p1}
\end{proposition}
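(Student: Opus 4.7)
The statement is a standard propagation-of-constraints result: the divergence constraint and the boundary conditions on the magnetic field are not evolution equations but are automatically preserved by the dynamics \eqref{16}--\eqref{17}. My plan is to derive transport-type equations for $\text{div}\,h$, for $H_N|_{\Gamma}$, and for $H_1|_{\Gamma_+}$, and then invoke uniqueness for linear first-order equations with zero initial data.

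\textbf{Step 1 (interior divergence constraint).} The magnetic-field block of \eqref{3} reads $dH/dt-(H\cdot\nabla)v+H\,\text{div}\,v=0$ in the original variables, and taking its divergence gives a homogeneous transport equation for $\text{div}\,H$. After the change of variables $x\mapsto\Phi(t,x)$, I would rewrite this block in the ``straightened'' form involving $h=(H_N,H_2\partial_1\Phi_1,H_3\partial_1\Phi_1)$; by a direct but tedious calculation (using the identities $\partial_t\Phi_1=\partial_t\Psi$, $\partial_k\Phi_1=\partial_k\Psi$ for $k=2,3$, and $\partial_1\Phi_1=1+\partial_1\Psi$) one finds
\[
\frac{1}{\partial_1\Phi_1}\bigl(\partial_t+(w\cdot\nabla)\bigr)(\text{div}\,h) + (\text{div}\,v)(\text{div}\,h)/\partial_1\Phi_1 = 0,
\]
where $w=(v_N-\partial_t\Psi,v_2\partial_1\Phi_1,v_3\partial_1\Phi_1)$ is the pulled-back advective field, which is tangent to $\Omega^\pm$ thanks to the boundary condition $\partial_t\varphi=v_N$ (so that $w_1=0$ on $\Gamma$) and tangent to $\Gamma_+$ thanks to $v_1=0$ there. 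This is a linear homogeneous transport equation for $\text{div}\,h$ with characteristic lines never leaving $\Omega^+$, so the assumption $(\text{div}\,h)|_{t=0}=0$ forces $\text{div}\,h\equiv0$ for $t\in[0,T]$.

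\textbf{Step 2 (boundary condition $H_N=0$ on $\Gamma$).} On $\Gamma$ we have $\Psi=\varphi$ and $\partial_1\Phi_1|_{\Gamma}=1+\chi'(0)\varphi$. I would evaluate the first component of the magnetic-field block at $x_1=0$, namely the evolution equation for $H_1$, and combine it with the equations for $H_2,H_3$ multiplied by $-\partial_2\varphi,-\partial_3\varphi$ respectively, so that the left-hand side becomes $(\partial_t+v_\tau\cdot\nabla_{x'})H_N$ for a suitable tangential velocity $v_\tau$. Using the boundary condition $\partial_t\varphi=v_N$ from \eqref{17}, the nonlinear couplings through $\partial_t\varphi$ and $\partial_k\varphi$ cancel the potentially problematic terms, leaving a linear transport equation for $H_N|_{\Gamma}$ of the form $(\partial_t+v_\tau\cdot\nabla_{x'})H_N+a(t,x')H_N=0$. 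Since $H_N|_{\Gamma,\,t=0}=0$ by assumption, uniqueness gives $H_N|_{\Gamma}=0$ for all $t\in[0,T]$.

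\textbf{Step 3 (boundary condition $H_1=0$ on $\Gamma_+$).} On $\Gamma_+=\{x_1=1\}$ the cut-off satisfies $\chi(1)=0$, hence $\Psi=0$, $\partial_1\Phi_1=1$, and $H_N=H_1$ there. The first component of the magnetic-field evolution becomes (after restriction) $\partial_t H_1+(v\cdot\nabla)H_1-(H\cdot\nabla)v_1+H_1\,\text{div}\,v=0$ at $x_1=1$. The term $(H\cdot\nabla)v_1$ reduces to $H_1\partial_1 v_1$ because $v_1\equiv0$ on $\Gamma_+$, and the normal advection $v_1\partial_1 H_1$ vanishes for the same reason; thus $H_1|_{\Gamma_+}$ satisfies a homogeneous linear transport equation on the 2-torus in $x'$, and vanishes for all time by the zero-initial-data hypothesis.

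\textbf{Main obstacle.} The routine but error-prone bookkeeping is in Step 2: one must verify that upon taking the linear combination of the first three components of the magnetic-field block and restricting to $\Gamma$, all terms involving $\partial_1 v$, $\partial_1 H$, or $\partial_1\Psi$ conspire (via $\partial_t\varphi=v_N$) to disappear, leaving a genuine tangential transport equation for $H_N|_{\Gamma}$. This is precisely the standard computation behind the characterization of $H\cdot N=0$ as a so-called ``characteristic'' boundary condition for MHD, and the argument is the same one used in \cite{ST}; I would simply carry it out in the present coordinates.
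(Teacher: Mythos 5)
Your approach is correct and is essentially the argument the paper relies on: the paper does not prove Proposition \ref{p1} itself but cites \cite{ST} (and \cite{T09} for the analogous constraints \eqref{27} on the basic state), where exactly this propagation-of-constraints scheme is carried out --- a homogeneous transport equation for ${\rm div}\, h$ (more precisely for ${\rm div}\, h/\partial_1\Phi_1$, with ${\rm div}\,u$ rather than ${\rm div}\,v$ in the zeroth-order coefficient, which changes nothing in the conclusion) together with tangential transport equations for $H_N|_{\Gamma}$ and $H_1|_{\Gamma_+}$, the tangency of the advective field being guaranteed by $\partial_t\varphi=v_N$ and $v_1|_{\Gamma_+}=0$. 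The bookkeeping you flag in Step 2 does close in the way you expect.
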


\subsection{Basic state}

Let us denote
\begin{equation}
\begin{array}{ll}\label{defQ}
Q_T:= (-\infty,T]\times\Omega,\quad Q^\pm_T:= (-\infty,T]\times\Omega^\pm,\\
 \omega_T:=(-\infty,T]\times\Gamma, \quad
\omega^\pm_T:=(-\infty,T]\times\Gamma_\pm.
\end{array}
\end{equation}
Let
\begin{equation}
(\widehat{U}(t,x ),\widehat{\mathcal{H}}(t,x ),\hat{\varphi}(t,{x}'))
\label{21}
\end{equation}
be a given sufficiently smooth vector-function with $\widehat{U}=(\hat{q},\hat{v},\widehat{H},\widehat{S})$ defined on $Q^+_T\times Q^-_T\times\omega_T$, with
\begin{equation}
\begin{array}{ll}\label{22}
\|\widehat{U}\|_{W^{2,\infty}(Q^+_T)}  + \|\widehat{\mathcal{H}}\|_{W^{2,\infty}(Q^+_T)} + \|\hat{\varphi}\|_{W^{2,\infty}(\omega_T)} \le K,
\qquad
 \| \hat\varphi\|_{L^{\infty}([0,T]\times\Gamma )}\leq 1/\kappa,
\end{array}
\end{equation}
where $K>0$ is a constant and $\kappa$ is the positive constant introduced above (we recall that $\|\chi'\|_{L^{\infty}(-1,1)}<\kappa /2$). Let also
\[
\widehat{\Phi} (t,x ) =(\widehat{\Phi}_1,\widehat{\Phi}_2,\widehat{\Phi}_3 )(t,x ):= ( x_1+\widehat{\Psi} (t,x ),x'),\quad  \widehat{\Psi}(t,x ):= \chi ( x_1)\hat{\varphi} (t,x').
\]

We assume that the basic state \eqref{21} satisfies
%\footnote{Differently from \cite{secchitrakhinin} we don't assume $\nabla\times \widehat{\mathfrak{H}}=0$ in $Q^-_T$.}
(for some positive $\rho_0,\rho_1\in\R$)
\begin{equation}
\rho (\hat{p},\widehat{S})\geq \rho_0 >0,\quad \rho_p(\hat{p},\widehat{S})\ge \rho_1 >0 \qquad \mbox{in}\ \overline{Q}^+_T,
\label{23}
\end{equation}
\begin{equation}
\partial_t\widehat{H}+\frac{1}{\partial_1\widehat{\Phi}_1}\left\{ (\hat{w} \cdot\nabla )
\widehat{H} - (\hat{h} \cdot\nabla ) \hat{v} + \hat{H}{\rm div}\,\hat{u}\right\} =0\qquad \mbox{in}\ \overline{Q}^+_T,
\label{26}
\end{equation}
\begin{equation}
 {\rm div}\,\hat{\mathfrak{h}}=0\qquad \mbox{in}\ Q^-_T,
\label{25}
\end{equation}
\begin{equation}
\partial_t\hat{\varphi}-\hat{v}_{N}=0,\quad \hat{\mathcal{H}}_N=0 \quad \mbox{on}\,\; \omega_T,\quad \hat{{v}}_1=0 \quad \mbox{on}\,\; \omega_T^+,\quad \nu\times\hat{\mathcal{H}}=\j \quad \mbox{on}\,\; \omega_T^-,\label{24}
\end{equation}
where all the ``hat'' values are determined like corresponding values for $(U,\mathcal{H},\varphi)$, i.e.
\[
\widehat{\mathfrak{H}}=(\widehat{\mathcal{H}}_1\partial_1\widehat{\Phi}_1,
\widehat{\mathcal{H}}_{\tau_2},\widehat{\mathcal{H}}_{\tau_3}),\quad
\hat{\mathfrak{h}}=(\widehat{\mathcal{H}}_{N},\widehat{\mathcal{H}}_2\partial_1\widehat{\Phi}_1,
\widehat{\mathcal{H}}_3\partial_1\widehat{\Phi}_1),
\quad \hat{h}=(\widehat{H}_{N},\widehat{H}_2\partial_1\widehat{\Phi}_1,\widehat{H}_3\partial_1\widehat{\Phi}_1),
\]
%%%
\[
 \hat{{H}}_{N}=\widehat{{H}}_1-
\hat{{H}}_2\partial_2\widehat{\Psi}- \hat{{H}}_3\partial_3\widehat{\Psi},\quad
\widehat{\mathcal{H}}_{N}=\widehat{\mathcal{H}}_1- \widehat{\mathcal{H}}_2\partial_2\widehat{\Psi}- \widehat{\mathcal{H}}_3\partial_3\widehat{\Psi},
\]
\[
\hat p=\hat q  -|\widehat H  |^2/2 ,\quad
\hat{v}_{N}=\hat{v}_1- \hat{v}_2\partial_2\widehat{\Psi}- \hat{v}_3\partial_3\widehat{\Psi},\quad
\]
%%%
\[
\hat{u}=(\hat{v}_{N},\hat{v}_2\partial_1\widehat{\Phi}_1,\hat{v}_3\partial_1\widehat{\Phi}_1),\quad
\hat{w}=\hat{u}-(\partial_t\widehat{\Psi},0,0).
\]
It follows from (\ref{26}) that the constraints
\begin{equation}
{\rm div}\,\hat{h}=0\quad \mbox{in}\; Q^+_T,\quad \hat{H}_{N}=0\quad \mbox{on}\,\; \omega_T,
\quad  \hat{H}_1=0 \quad {\rm on } \; \omega_T^+, \label{27}
\end{equation}
are satisfied for the basic state (\ref{21}) if they hold at $t=0$ (see \cite{T09} for the proof).
Thus, for the basic state we also require the fulfillment of conditions
\eqref{27} at $t=0$.

\subsection{Linearized problem}

The linearized equations for (\ref{16}), (\ref{17}) read:
\[
\mathbb{P}'(\widehat{U},\widehat{\Psi})(\delta U,\delta\Psi):=
\frac{\rm d}{{\rm d}\varepsilon}\mathbb{P}(U_{\varepsilon},\Psi_{\varepsilon})|_{\varepsilon =0}=f
\qquad \mbox{in}\ Q^+_T,
\]
\[
\mathbb{V}'(\widehat{\mathcal{H}},\widehat{\Psi})(\delta \mathcal{H},\delta\Psi):=
\frac{\rm d}{{\rm d}\varepsilon}\mathbb{V}(\mathcal{H}_{\varepsilon},\Psi_{\varepsilon})|_{\varepsilon =0}=\mathcal{G}'
\qquad \mbox{in}\ Q^-_T,
\]
\[
\mathbb{B}'(\widehat{U},\widehat{\mathcal{H}},\hat{\varphi})(\delta U,\delta \mathcal{H},\delta \varphi ):=
\frac{\rm d}{{\rm d}\varepsilon}\mathbb{B}(U_{\varepsilon},\mathcal{H}_{\varepsilon},\varphi_{\varepsilon})|_{\varepsilon =0}={g}
\qquad \mbox{on}\ \omega_T^3\times\omega_T^\pm,
\]
where $U_{\varepsilon}=\widehat{U}+ \varepsilon\,\delta U$, $\mathcal{H}_{\varepsilon}=
\widehat{\mathcal{H}}+\varepsilon\,\delta \mathcal{H}$,
$\varphi_{\varepsilon}=\hat{\varphi}+ \varepsilon\,\delta \varphi$, $\Psi_{\varepsilon}=\chi\varphi_{\varepsilon}$ and
$\delta\Psi=\chi\delta\varphi$. In the above boundary equation the first three components are taken on $\omega_T$, the fourth one on $\omega_T^+$, and the fifth one on $\omega_T^-$.
Here we introduce the source terms $f=(f_1,\ldots ,f_8)$, $\mathcal{G}'=(\chi,
\Xi)$, $\chi=(\chi_1, \chi_2,
\chi_3)$, and $g=(g_1,g_2,g_3)$ to make the interior equations and the boundary conditions inhomogeneous.

The exact form of the linearized equations (below we drop $\delta$) read:
\[
\mathbb{P}'(\widehat{U},\widehat{\Psi})(U,\Psi)
=
P(\widehat{U},\widehat{\Psi})U +{\mathcal C}(\widehat{U},\widehat{\Psi})
U -   \bigl\{P(\widehat{U},\widehat{\Psi})\Psi\bigr\}\frac{\partial_1\widehat{U}}{\partial_1\widehat{\Phi}_1}
=f,
\]
\[
\mathbb{V}'(\widehat{\mathcal{H}},\widehat{\Psi})(\mathcal{H},\Psi)=
\mathbb{V}(\mathcal{H},\widehat{\Psi})+
\left(\begin{array}{c}
\nabla\widehat{\mathcal{H}}_1\times\nabla\Psi\\[3pt]
\nabla \times \left(\begin{array}{c} 0 \\ -\widehat{\mathcal{H}}_3 \\
\widehat{\mathcal{H}}_2 \end{array} \right) \cdot \nabla\Psi
\end{array}
\right)=\mathcal{G}',
\]
\[
\mathbb{B}'(\widehat{U},\widehat{\mathcal{H}},\hat{\varphi})(U,\mathcal{H},\varphi )=
\left(
\begin{array}{c}
\partial_t\varphi +\hat{v}_2\partial_2\varphi+\hat{v}_3\partial_3\varphi -v_{N}\\[3pt]
q-\widehat{\mathcal{H}} \cdot \mathcal{H}\\[3pt]
\mathcal{H}_N-\widehat{\mathcal{H}}_2\partial_2\varphi -\widehat{\mathcal{H}}_3\partial_3\varphi
\\v_1
\\\nu\times\H
\end{array}
\right)=g,
\]
where $v_{N}:= v_1-v_2\partial_2\widehat{\Psi}-v_3\partial_3\widehat{\Psi}$ and the matrix
${\mathcal C}(\widehat{U},\widehat{\Psi})$ is determined as follows:
\[
\begin{array}{r}
{\mathcal C}(\widehat{U},\widehat{\Psi})Y
= (Y ,\nabla_yA_0(\widehat{U} ))\partial_t\widehat{U}
 +(Y ,\nabla_y\widetilde{A}_1(\widehat{U},\widehat{\Psi}))\partial_1\widehat{U}
 \\[6pt]
+ (Y ,\nabla_yA_2(\widehat{U} ))\partial_2\widehat{U}
+ (Y ,\nabla_yA_3(\widehat{U} ))\partial_3\widehat{U},
\end{array}
\]
\[
(Y ,\nabla_y A(V)):=\sum_{i=1}^8y_i\left.\left(\frac{\partial A (Y )}{
\partial y_i}\right|_{Y =V}\right),\quad Y =(y_1,\ldots ,y_8).
\]
Since the differential operators $\mathbb{P}'(\widehat{U},\widehat{\Psi})$ and $\mathbb{V}'(\widehat{\mathcal{H}},\widehat{\Psi})$ are first-order operators in $\Psi$, we rewrite the linearized problem in terms of the Alinhac's ``good unknown'' (see \cite{Al})
\begin{equation}
\dot{U}:=U -\frac{\Psi}{\partial_1\widehat{\Phi}_1}\,\partial_1\widehat{U},\quad
\dot{\mathcal{H}}:=\mathcal{H} -\frac{\Psi}{\partial_1\widehat{\Phi}_1}\,\partial_1\widehat{\mathcal{H}}.
\label{29}
\end{equation}
Taking into account assumptions \eqref{24} and omitting detailed calculations,
we rewrite our linearized equations in terms of the new unknowns \eqref{29}:
\begin{equation}\label{30}
\begin{array}{ll}
\ds \PP'(\widehat{U},\widehat{\Psi})({U},{\Psi})=P(\widehat{U},\widehat{\Psi})\dot{U} +{\mathcal C}(\widehat{U},\widehat{\Psi})
\dot{U} + \frac{\Psi}{\partial_1\widehat{\Phi}_1}\,\partial_1\bigl\{\mathbb{P}
(\widehat{U},\widehat{\Psi})\bigr\}=f,\\
\\
\ds \mathbb{V}'(\widehat{\mathcal{H}},\widehat{\Psi})({\mathcal{H}},{\Psi})=\mathbb{V}(\dot{\mathcal{H}},\widehat{\Psi})+ \frac{\Psi}{\partial_1\widehat{\Phi}_1}\,\partial_1\bigl\{\mathbb{V}
(\widehat{\mathcal{H}},\widehat{\Psi})\bigr\}=\mathcal{G}',
\end{array}
\end{equation}
\begin{multline}
\mathbb{B}'_e(\widehat{U},\widehat{\mathcal{H}},\hat{\varphi})(\dot{U},\dot{\mathcal{H}},\varphi ):= \mathbb{B}'(\widehat{U},\widehat{\mathcal{H}},\hat{\varphi})(U,\mathcal{H},\varphi )\\[6pt] =
 \left(
\begin{array}{c}
\partial_t\varphi+\hat{v}_2\partial_2\varphi+\hat{v}_3\partial_3\varphi-\dot{v}_{N}-
\varphi\,\partial_1\hat{v}_{N}\\[3pt]
\dot{q}-\widehat{\mathcal{H}} \cdot \dot{\mathcal{H}}+ [\partial_1\hat{q}]\varphi \\[3pt]
\dot{\mathcal{H}}_{N}-\partial_2\bigl(\widehat{\mathcal{H}}_2\varphi \bigr) -\partial_3\bigl(\widehat{\mathcal{H}}_3\varphi \bigr)\\[3pt]
\dot{v}_1\\[3pt]
\nu\times\dot\H
\end{array}\right)=g,\qquad
\label{32}
\end{multline}
where $\dot{v}_{\rm N}=\dot{v}_1-\dot{v}_2\partial_2\hat{\Psi}-\dot{v}_3\partial_3\hat{\Psi}$,
$\dot{\mathcal{H}}_{N}=\dot{\mathcal{H}}_1-\dot{\mathcal{H}}_2\partial_2\hat{\Psi}-\dot{\mathcal{H}}_3\partial_3\hat{\Psi}$, and
\[
[\partial_1\hat{q}]=(\partial_1\hat{q})|_{x_1=0}-(\widehat{\mathcal{H}} \cdot \partial_1\widehat{\mathcal{H}})|_{x_1=0}.
\]
We used assumption \eqref{25}, taken at $x_1=0$, while writing down the third boundary condition in \eqref{32}.

As in \cite{Al} (and, e.g., in \cite{ST,T09,Tcpam}), we drop the zeroth-order terms in $\Psi$ in \eqref{30} and consider the effective linear operators
\begin{equation}
\begin{array}{ll}\label{effectivelineq}

\mathbb{P}'_e(\widehat{U},\widehat{\Psi})\dot{U} :=P(\widehat{U},\widehat{\Psi})\dot{U} +{\mathcal C}(\widehat{U},\widehat{\Psi})
\dot{U}=f, \\
\ds \mathbb{V}'_e(\widehat{\mathcal{H}},\widehat{\Psi})\dot{\mathcal{H}}:=\mathbb{V}(\dot{\mathcal{H}},\widehat{\Psi})=\mathcal{G}'.
\end{array}
\end{equation}
In the future nonlinear analysis the dropped terms in (\ref{30}) should be considered as error terms (see \cite{Al,ST,T09,Tcpam}).
%%%%%%
With the new form \eqref{effectivelineq}, \eqref{32} of the linearized equations, our linearized problem for $(\dot{U},\dot{\mathcal{H}},\varphi )$ reads in explicit form:
%%%%%
\begin{subequations}\label{34}
\begin{align}
\widehat{A}_0\partial_t\dot{U}+\sum_{j=1}^{3}\widehat{A}_j\partial_j\dot{U}+
\widehat{\mathcal C}\dot{U}=f \qquad &\mbox{in}\ Q^+_T,\label{34a}
\\
\nabla\times \dot{\mathfrak{H}}=\chi,\quad {\rm div}\,\dot{\mathfrak{h}}=\Xi \qquad &\mbox{in}\ Q^-_T, \label{36'}
\\
\partial_t\varphi=\dot{v}_{N}-\hat{v}_2\partial_2\varphi-\hat{v}_3\partial_3\varphi +
\varphi\,\partial_1\hat{v}_{N}+g_1,  \qquad &\label{34b}
\\
\dot{q}=\widehat{\mathcal{H}}\cdot\dot{\mathcal{H}}-  [ \partial_1\hat{q}] \varphi +g_2, \qquad & \label{35}
\\
\dot{\mathcal{H}}_{N} =\partial_2\bigl(\widehat{\mathcal{H}}_2\varphi \bigr) +\partial_3\bigl(\widehat{\mathcal{H}}_3\varphi \bigr)+g_3\qquad &\mbox{on}\ \omega_T,
\label{37}
\\
\dot{{v}}_1=g_4 \quad \mbox{on}\,\; \omega_T^+,\qquad \nu\times\dot{\mathcal{H}}=g_5 \quad &\mbox{on}\,\; \omega_T^-,\label{37a}
\\
(\dot{U},\dot{\mathcal{H}},\varphi )=0\qquad &\mbox{for}\ t<0,\label{38a}
\end{align}
\end{subequations}
%%%%
%%%%
where
\[
\widehat{A}_{\alpha}=:{A}_{\alpha}(\widehat{U} ),\quad \alpha =0,2,3,\quad
\widehat{A}_1=:\widetilde{A}_1(\widehat{U} ,\widehat{\Psi}),\quad
\widehat{\mathcal C}:={\mathcal C}(\widehat{U},\widehat{\Psi}),
\]
\[
\dot{\mathfrak{H}}=(\dot{\mathcal{H}}_1\partial_1\widehat{\Phi}_1,\dot{\mathcal{H}}_{\tau_2},\dot{\mathcal{H}}_{\tau_3}),\quad
\dot{\mathfrak{h}}=(\dot{\mathcal{H}}_{N},\dot{\mathcal{H}}_2\partial_1\widehat{\Phi}_1,\dot{\mathcal{H}}_3\partial_1\widehat{\Phi}_1),
\]
\[
\dot{\mathcal{H}}_{N}=\dot{\mathcal{H}}_1-\dot{\mathcal{H}}_2\partial_2\widehat{\Psi}-\dot{\mathcal{H}}_3\partial_3\widehat{\Psi},\quad
\dot{\mathcal{H}}_{\tau_i}=\dot{\mathcal{H}}_1\partial_i\widehat{\Psi}+\dot{\mathcal{H}}_i,\quad i=2,3.
\]
For the resolution of the elliptic problem \eqref{36'}, \eqref{37}, \eqref{37a} the data $\chi,g_5$ must satisfy some necessary compatibility conditions described in \cite{ST}.

We assume that the source terms $f, \chi,\Xi $ and the boundary data $g$ vanish in the past and consider the case of zero initial data. We postpone the case of nonzero initial data to the nonlinear analysis (see e.g. \cite{ST,Tcpam}).

\subsection{Reduced linearized problem}

From problem \eqref{34} we can deduce nonhomogeneous counterparts of the divergence constraint ${\rm div}\,\dot h=0$ and the ``redundant'' boundary conditions $\dot H_N|_{x_1=0}=0$ and $\dot H_1|_{x_1=1}=0$ (see \cite{ST}), where $\dot{h}=(\dot{H}_{N},\dot{H}_2\partial_1\widehat{\Phi}_1,\dot{H}_3\partial_1\widehat{\Phi}_1)$, $\dot{H}_{N}=\dot{H}_1-\dot{H}_2\partial_2\widehat{\Psi}-\dot{H}_3
\partial_3\widehat{\Psi}.$ Following \cite{ST}, we  can use an appropriate shift of the unknown
\[
U^{\natural}=\dot{U}-\widetilde{U},\quad \mathcal{H}^{\natural}=\dot{\mathcal{H}}-{\mathcal{H}}''
\]
to make homogeneous not only the mentioned divergence constraint and the ``redundant" boundary conditions but also all the boundary conditions \eqref{34b}--\eqref{37a} as well as the elliptic system \eqref{36'}, where the construction of the shifting functions $\widetilde{U}$ and ${\mathcal{H}}''$ is described in \cite{ST} in full details.

Dropping for convenience the indices $^{\natural}$ above, our reduced linearized problem in terms of the new unknown $(U,\mathcal{H}):=(U^{\natural},\mathcal{H}^{\natural})$ reads
\begin{subequations}\label{34'}
\begin{align}
\widehat{A}_0\partial_t{U}+\sum_{j=1}^{3}\widehat{A}_j\partial_j{U}+
\widehat{\mathcal C}{U}=F \qquad &\mbox{in}\ Q^+_T,\label{34'a}
\\
\nabla\times {\mathfrak{H}}=0,\quad {\rm div}\,{\mathfrak{h}}=0 \qquad &\mbox{in}\ Q^-_T, \label{36'b}
\\
\partial_t\varphi={v}_{N}-\hat{v}_2\partial_2\varphi-\hat{v}_3\partial_3\varphi +
\varphi\,\partial_1\hat{v}_{N},  \qquad &
\\
{q}=\widehat{\mathcal{H}}\cdot{\mathcal{H}}-  [ \partial_1\hat{q}] \varphi , \qquad & \label{35'd}
\\
{\mathcal{H}}_{N} =\partial_2\bigl(\widehat{\mathcal{H}}_2\varphi \bigr) +\partial_3\bigl({\widehat{H}}_3\varphi \bigr)\qquad &\mbox{on}\ \omega_T,
\label{37'}
\\
{{v}}_1=0 \quad \mbox{on}\,\; \omega_T^+,\qquad \nu\times{\mathcal{H}}=0 \quad &\mbox{on}\,\; \omega_T^-,\label{37'a}
\\
({U},{\mathcal{H}},\varphi )=0\qquad & \mbox{for}\ t<0.\label{38'f}
\end{align}
\end{subequations}
and solutions should satisfy
\begin{equation}
{\rm div}\,{h}=0\qquad\mbox{in}\ Q^+_T,
\label{93}
\end{equation}
\begin{equation}
{H}_{N}=\widehat{H}_2\partial_2\varphi +\widehat{H}_3\partial_3\varphi -
\varphi\,\partial_1\widehat{H}_{N}\quad\mbox{on}\ \omega_T, \qquad {H}_{1}=0\quad\mbox{on}\ \omega_T^+.
\label{95}
\end{equation}
All the notations here for $U$ and $\mathcal{H}$ (e.g., $h$, $\mathfrak{H}$, $\mathfrak{h}$, etc.) are analogous to the corresponding ones for $\dot{U}$ and $\dot{\mathcal{H}}$ introduced above. Note also that, in view of \eqref{93}, the first condition in \eqref{95} can be rewritten as
\begin{equation}
{H}_{N}=\partial_2(\widehat{H}_2\varphi )+\partial_3(\widehat{H}_3\varphi )\quad\mbox{on}\ \omega_T^+.
\label{95'}
\end{equation}

Again, as in \cite{ST}, it will be convenient for us to make use of different ``plasma" variables and an equivalent form of equations \eqref{34'a}. We define the matrix
\begin{equation}
\begin{array}{ll}\label{defeta}
\hat \eta=\begin{pmatrix}
 1&-\ddue\widehat\Psi &-\dtre\widehat\Psi \\
0 &\duno\widehat\Phi_1&0\\
0&0&\duno\widehat\Phi_1
\end{pmatrix}.
\end{array}
\end{equation}
It follows that
\begin{equation}
\begin{array}{ll}\label{defcalU}
{u}=({v}_{N},{v}_2\partial_1\widehat{\Phi}_1,{v}_3\partial_1\widehat{\Phi}_1)=\hat \eta\, v, \qquad
{h}=({H}_{N},{H}_2\partial_1\widehat{\Phi}_1,{H}_3\partial_1\widehat{\Phi}_1)=\hat\eta \,H.
\end{array}
\end{equation}
Multiplying \eqref{34'a} on the left side by the matrix
\begin{equation*}
\begin{array}{ll}\label{}
\widehat R=\begin{pmatrix}
 1&\underline 0&\underline 0&0 \\
 \underline0^T&\hat \eta&0_3& \underline 0^T\\
 \underline 0^T&0_3&\hat \eta&\underline 0^T\\
 0&\underline 0^T&\underline 0^T&1
\end{pmatrix},
\end{array}
\end{equation*}
after some calculations we get the symmetric hyperbolic system for the new vector of unknowns $\mathcal{U}=(q,u,h,S)$
(compare with \eqref{3'}, \eqref{34'a}):
\begin{equation}
\begin{array}{ll}\label{34'''}
\duno\widehat\Phi_1
\left(\begin{matrix}
{\hat{\rho}_p/\hat{\rho}}&\underline
0&-({\hat{\rho}_p/\hat{\rho}})\hat{h} &0 \\
\underline 0^T&\hat{\rho}
\hat a_0&0_3&\underline 0^T\\
-({\hat{\rho}_p/\hat{\rho}})\hat{h}^T&0_3&\hat a_0 +({\hat{\rho}_p/\hat{\rho}})\hat{h}\otimes\hat{h}&\underline 0^T\\
0&\underline 0&\underline 0&1
\end{matrix}\right)\dt
\left(\begin{matrix}
q \\ u \\
h\\S \end{matrix}\right)
 +
\left( \begin{matrix}
0&\nabla\cdot&\underline 0&0\\
\nabla&0_3&0_3 &\underline 0^T\\
\underline 0^T&0_3 &0_3&\underline 0^T\\
0&\underline 0&\underline 0&0
\end{matrix}\right)
\left(\begin{matrix}q \\ u \\ h\\S \end{matrix}\right)
\\
 \\
 +
\duno\widehat\Phi_1
\left( \begin{matrix}
(\hat{\rho}_p/\hat{\rho})
\hat w \cdot\nabla&\nabla\cdot&-({\hat{\rho}_p/\hat{\rho}})\hat{h}\hat w \cdot\nabla&0\\
\nabla&\hat{\rho} \hat a_0\hat w \cdot\nabla&-\hat a_0\hat{h} \cdot\nabla &\underline 0^T\\
-({\hat{\rho}_p/\hat{\rho}})\hat{h}^T \hat w \cdot\nabla&-\hat a_0\hat{h} \cdot\nabla &(\hat a_0 +({\hat{\rho}_p/\hat{\rho}})\hat{h}\otimes\hat{h})
\hat w \cdot\nabla&\underline 0^T\\
0&\underline 0&\underline 0&\hat w \cdot\nabla
\end{matrix}\right)
\left(\begin{matrix}q \\ u \\ h\\S \end{matrix}\right)
+\widehat{\mathcal{C}}'\mathcal{U}=\mathcal{F}\,,
\end{array}
\end{equation}
where $\hat{\rho}:=\rho (\hat{p},\widehat{S})$, $\hat{\rho}_p:=\rho_p (\hat{p},\widehat{S})$, and $\hat a_0$ is the symmetric and positive definite matrix
$$
\hat a_0 =(\hat \eta^{-1})^T\hat \eta^{-1},$$
with a new matrix $\widehat{\mathcal{C}}'$ in the zero-order term (whose precise form has no importance) and where we have set
$
\mathcal{F}=\duno\widehat\Phi_1 \, \widehat R
F.$
We write system \eqref{34'''} in compact form as
\begin{equation}
\begin{array}{ll}
\displaystyle \widehat{\mathcal{A}}_0\partial_t{\mathcal{U}}+\sum_{j=1}^{3}(\widehat{\mathcal{A}}_j+{\mathcal{E}}_{1j+1})\partial_j{\mathcal{U}}+
\widehat{\mathcal C}'{\mathcal{U}}=\mathcal{F} ,\label{73}
\end{array}
\end{equation}
where
\[
\mathcal{E}_{12}=\left(\begin{array}{cccccc}
0& 1 &0 &0 & \cdots & 0 \\
1 & 0 &0 &0 & \cdots & 0 \\
0 & 0 &0 &0 & \cdots & 0 \\
0 & 0 &0 &0 & \cdots & 0 \\
\vdots & \vdots &\vdots &\vdots& & \vdots \\
0& 0 &0 &0 & \cdots & 0  \end{array}
 \right), \qquad
\,
\mathcal{E}_{13}=\left(\begin{array}{cccccc}
0& 0 &1 &0 & \cdots & 0 \\
0 & 0 &0 &0 & \cdots & 0 \\
1 & 0 &0 &0 & \cdots & 0 \\
0 & 0 &0 &0 & \cdots & 0 \\
\vdots & \vdots &\vdots &\vdots& & \vdots \\
0& 0 &0 &0 & \cdots & 0  \end{array}
 \right),
 \]
\[
\mathcal{E}_{14}=\left(\begin{array}{cccccc}
0& 0 &0 & 1&\cdots & 0 \\
0 & 0 &0 &0& \cdots & 0 \\
0 & 0 &0 &0& \cdots & 0 \\
1 & 0 &0 &0& \cdots & 0 \\
\vdots &\vdots & \vdots &\vdots & & \vdots \\
0& 0 &0 &0& \cdots & 0
\end{array}
 \right).
\]
The formulation \eqref{73} has the advantage of the form of the boundary matrix of the system $\widehat{\mathcal{A}}_1+{\mathcal{E}}_{12}$, with
\begin{equation}
\begin{array}{ll}\label{a10}
\widehat{\mathcal{A}}_1=0 \qquad\mbox{on }\omega_T\cup\omega_T^+,
\end{array}
\end{equation}
because $\hat w_1=\hat h_1=0$, and ${\mathcal{E}}_{12}$ is a constant matrix.
That is, system \eqref{73} is symmetric hyperbolic with characteristic boundary of constant multiplicity \cite{Rauch}. The final form of our reduced linearized problem is thus
\begin{subequations}\label{34'new}
\begin{alignat}{2}
\displaystyle
& \widehat{\mathcal{A}}_0\partial_t{\mathcal{U}}+\sum_{j=1}^{3}(\widehat{\mathcal{A}}_j+{\mathcal{E}}_{1j+1})\partial_j{\mathcal{U}}+
\widehat{\mathcal C}'{\mathcal{U}}=\mathcal{F}
  &\qquad\mbox{in}&\ Q^+_T,\label{34'anew}
\\
&  \nabla\times {\mathfrak{H}}=0,\quad {\rm div}\,{\mathfrak{h}}=0 &\qquad\mbox{in}&\ Q^-_T, \label{36'bnew}
\\
&  \partial_t\varphi=v_N-\hat{v}_2\partial_2\varphi-\hat{v}_3\partial_3\varphi +
\varphi\,\partial_1\hat{v}_{N},   & & \label{35"}
\\
&  {q}=\widehat{\mathcal{H}}\cdot{\mathcal{H}}-  [ \partial_1\hat{q}] \varphi ,  & &\label{35'dnew}
\\
&  {\mathcal{H}}_{N} =\partial_2\bigl(\widehat{\mathcal{H}}_2\varphi \bigr) +\partial_3\bigl(\widehat{\mathcal{H}}_3\varphi \bigr) &\qquad\mbox{on}&\ \omega_T,
\label{37'new}
\\
&  {{v}}_1=0 &\qquad \mbox{on}&\ \omega_T^+,\label{37''anew}
\\
& \mathcal{H}_2=\mathcal{H}_3=0  &\qquad\mbox{on}&\ \omega_T^-,\label{37'anew}
\\
&  (\mathcal{U},{\mathcal{H}},\varphi )=0 & \qquad\mbox{for}&\ t<0,\label{38'fnew}
\end{alignat}
\end{subequations}
Recall that the solutions of problem \eqref{34'new} satisfy \eqref{93} and
\eqref{95}.

\subsection{Main results}

We are now in a position to state our main results.

\begin{theorem}
Let the basic state \eqref{21} satisfies assumptions \eqref{23}--\eqref{27}, the collinearity condition
\begin{equation}
\widehat{\mathcal{H}}|_{x_1=0}=\hat{\beta}\widehat{H}|_{x_1=0}\qquad \mbox{on}\ \omega_T\label{collin}
\end{equation}
and the Rayleigh-Taylor sign condition\footnote{Inequality \eqref{39} is just the Rayleigh-Taylor sign condition \eqref{RT} written for the straightened unperturbed interface (with the equation $x_1=0$).}
\begin{equation}
[\partial_1\hat{q}] \geq \epsilon >0\qquad \mbox{on}\ \omega_T,\label{39}
\end{equation}
where $\hat{\beta}=\hat{\beta} (t,x')$ is a smooth function, $[\partial_1\hat{q}]=\partial_1\hat{q}|_{x_1=0}-(\widehat{\mathcal{H}}\cdot\partial_1\widehat{\mathcal{H}})|_{x_1=0}$ and $\epsilon$ is a fixed constant. Then, sufficiently smooth solutions $(\mathcal{U},{\mathcal{H}},\varphi )$ of problem \eqref{34'new} obey the a priori estimate
\begin{equation}
\|\mathcal{U}\|_{L^2(Q_T^+)}+\|{\mathcal{H}}\|_{L^2(Q_T^-)}+\|\varphi\|_{L^2(\omega_T)}\leq C\|\mathcal{F}  \|_{L^2(Q_T^+)},
\label{est}
\end{equation}
where $C=C(K,T,\epsilon )>0$ is a constant independent of the data
$\mathcal{F}$.
\label{t2.1}
\end{theorem}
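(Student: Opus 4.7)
My strategy is to combine a direct $L^2$ energy identity for the symmetric hyperbolic plasma system \eqref{34'anew} with an elliptic estimate for the vacuum div-curl system \eqref{36'bnew}, the two coupled through the boundary conditions at $\Gamma$. The Rayleigh--Taylor sign condition \eqref{39} will provide the positive surface integral that $L^2$-controls $\varphi$, while the collinearity hypothesis \eqref{collin} plays the role that the ellipticity (non-collinearity) condition \eqref{non-coll} played in \cite{Tjde,ST1,ST}.

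First I would multiply \eqref{34'anew} by $\mathcal{U}$ and integrate over $Q_T^+$. Thanks to the symmetry of the $\widehat{\mathcal{A}}_j$, the vanishing $\widehat{\mathcal{A}}_1=0$ on $\omega_T\cup\omega_T^+$ in \eqref{a10}, and the boundary condition $v_1|_{\omega_T^+}=0$, the only surviving surface contribution is the characteristic term $-2\int_{\omega_T} q\,v_N\,dx' dt$ coming from the constant matrix $\mathcal{E}_{12}$. Substituting \eqref{35'dnew} for $q$ and \eqref{35"} for $v_N$, the key piece $2\int_{\omega_T}[\partial_1\hat{q}]\,\varphi\,\partial_t\varphi\,dx' dt$ is integrated by parts in time, using the zero past \eqref{38'fnew}, to give
\[
\int_\Gamma[\partial_1\hat{q}](T,\cdot)\,\varphi(T,\cdot)^2\,dx' \;-\; \int_{\omega_T}\partial_t[\partial_1\hat{q}]\,\varphi^2\,dx' dt,
\]
whose first term is bounded below by $\epsilon\,\|\varphi(T)\|_{L^2(\Gamma)}^2$ by \eqref{39}; the remaining tangential cross-terms originating from the transport piece of \eqref{35"} are absorbed by tangential integration by parts on $\Gamma$ together with Young's inequality.

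The main obstacle is the residual coupling term $-2\int_{\omega_T}\widehat{\mathcal{H}}\cdot\mathcal{H}\,v_N\,dx' dt$, which in the non-collinear case \cite{Tjde,ST1,ST} is tamed by the elliptic front symbol, a machinery unavailable here. The collinearity \eqref{collin} combined with $\widehat{H}_N|_{\omega_T}=0$ from \eqref{27} yields on $\Gamma$ the identity
\[
\widehat{\mathcal{H}}\cdot\mathcal{H}=\hat{\beta}\bigl(\widehat{H}_2\,\mathcal{H}_{\tau_2}+\widehat{H}_3\,\mathcal{H}_{\tau_3}\bigr),
\]
reducing the coupling to tangential traces of $\mathcal{H}$. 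Writing $v_N=\partial_t\varphi+\text{l.o.t.}$, integrating by parts in $t$ once more, and invoking a standard div-curl estimate for \eqref{36'bnew}, \eqref{37'new}, \eqref{37'anew}---whose Neumann-type datum $\partial_j(\widehat{\mathcal{H}}_j\varphi)$ is a tangential divergence and hence, by duality, permits an $L^2(\Omega^-)$ bound on $\mathcal{H}$ in terms of $\|\varphi\|_{L^2(\Gamma)}$ up to harmless lower-order corrections---the endpoint contribution $\int_\Gamma\widehat{\mathcal{H}}\cdot\mathcal{H}(T)\,\varphi(T)\,dx'$ is absorbed into $\tfrac{\epsilon}{4}\|\varphi(T)\|_{L^2(\Gamma)}^2+C\|\mathcal{H}(T)\|_{L^2(\Omega^-)}^2$, while the bulk $\int_{\omega_T}\partial_t(\widehat{\mathcal{H}}\cdot\mathcal{H})\,\varphi\,dx' dt$ is handled through the time-differentiated elliptic system for $\partial_t\mathcal{H}$. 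Gronwall's inequality then closes the chain and yields \eqref{est}.
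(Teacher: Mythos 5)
There is a genuine gap at the heart of your treatment of the coupling term $-2\int_{\omega_T}\widehat{\mathcal{H}}\cdot\mathcal{H}\,v_N$. You claim that the vacuum system \eqref{36'bnew}, \eqref{37'new}, \eqref{37'anew} ``permits an $L^2(\Omega^-)$ bound on $\mathcal{H}$ in terms of $\|\varphi\|_{L^2(\Gamma)}$'' because the Neumann datum is a tangential divergence. This is false: testing the Neumann problem against $\xi$ gives $\|\nabla\xi\|^2_{L^2(\Omega^-)}\lesssim\big|\int_\Gamma\widehat{\mathcal{H}}'\varphi\cdot\nabla'\xi\big|$, and the right-hand side can only be closed by pairing $\varphi\in H^{1/2}(\Gamma)$ against $\nabla'\xi\in H^{-1/2}(\Gamma)$; you end up with $\|\mathcal{H}\|_{L^2(\Omega^-)}\lesssim\|\varphi\|_{H^{1/2}(\Gamma)}$, a half-derivative more than the $L^2$ energy controls. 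Your fallback of handling $\int_{\omega_T}\partial_t(\widehat{\mathcal{H}}\cdot\mathcal{H})\varphi$ ``through the time-differentiated elliptic system for $\partial_t\mathcal{H}$'' has the same defect: it introduces quantities not controlled at the $L^2$ level, i.e.\ a loss of derivatives, which is exactly what the theorem asserts does not happen. The paper's mechanism is entirely different: it never estimates $\mathcal{H}$ in terms of $\varphi$. Writing $\widehat{\mathcal{H}}\cdot\mathcal{H}=\widehat{\mathcal{H}}'\cdot\nabla'\xi$ on $\Gamma$ (using $\widehat{\mathcal{H}}_N=0$) and integrating the piece $-(\widehat{\mathcal{H}}\cdot\mathcal{H})\partial_t\varphi$ by parts tangentially and then in time, the boundary condition \eqref{37'new} turns it into $\int_\Gamma\xi\,\mathcal{H}_N$ at time $t$ plus remainders, and the exact identity $\int_\Gamma\xi\,\mathcal{H}_N\,{\rm d}x'=\int_{\Omega^-}\partial_1\widehat{\Phi}_1|\mathcal{H}|^2\,{\rm d}x$ (Lemma \ref{l1}) then \emph{produces} the positive vacuum energy $\tfrac12\|\mathcal{H}(t)\|^2_{L^2(\Omega^-)}$ on the left-hand side of the estimate. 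The vacuum field is controlled as part of the energy functional, not via an elliptic a priori bound.

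You have also misplaced the role of the collinearity hypothesis. The identity $\widehat{\mathcal{H}}\cdot\mathcal{H}=\hat{\beta}(\widehat{H}_2\mathcal{H}_{\tau_2}+\widehat{H}_3\mathcal{H}_{\tau_3})$ you derive is correct but is not where \eqref{collin} is needed: the steps above handling $\widetilde{\mathcal{Q}}_{11}$ and $\widetilde{\mathcal{Q}}_{21}$ go through without it. Collinearity enters only in the residual boundary integral $\mathcal{N}(t)=-\int_{\omega_t}(\hat{a}'\cdot\nabla'\xi)\varphi$, where the vector $\hat{a}'$ is assembled from the lower-order terms and, by the transport equation \eqref{26} for $\widehat{H}$ (an assumption you never invoke), equals $2\big(\partial_t\widehat{\mathcal{H}}'+(\hat{v}'\cdot\nabla')\widehat{\mathcal{H}}'-(\widehat{\mathcal{H}}'\cdot\nabla')\hat{v}'+\widehat{\mathcal{H}}'{\rm div}\,\hat{u}\big)$. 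Only because of \eqref{collin} does this collapse to $\hat{\kappa}\widehat{H}'$ with $\hat{\kappa}=2\,{\rm d}\hat{\beta}/{\rm d}t$, which allows a further integration by parts using \eqref{95'} and the estimate \eqref{l1.3} to bound $\mathcal{N}(t)$ by time integrals of $\|H\|^2_{L^2(\Omega^+)}$, $\|\mathcal{H}\|^2_{L^2(\Omega^-)}$ and $\|\varphi\|^2_{L^2(\Gamma)}$. Without this algebraic cancellation your argument cannot close at the $L^2$ level.
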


Here we have formulated the result for the reduced problem \eqref{34'new}. Returning to the original linearized problem \eqref{34} with inhomogeneous boundary conditions and inhomogeneous equations in the vacuum region, as in \cite{ST,Tjde}, we get an a priori estimate with a loss of derivatives from the data. That is, in the future nonlinear analysis we will have to use Nash-Moser iterations for compensating this loss of derivatives.

In Nash-Moser iterations the basic state \eqref{21} playing the role of an intermediate state $(U_{n+1/2},\mathcal{H}_{n+1/2},\varphi_{n+1/2})$ (see \cite{ST,T09,Tcpam}) should finally converge to a solution of the nonlinear problem. Therefore, it is natural to assume that the basic state \eqref{21} besides \eqref{23}--\eqref{27} also satisfies the second boundary condition in \eqref{17} expressing the continuity of the total pressure. Moreover, recall that $\hat{\rho}|_{x_1=0}>0$ (see \eqref{23}). For the most usable case of a polytropic gas for which $\rho>0$ implies $p>0$, the continuity of the total pressure together with the condition $\hat{p}|_{x_1=0}>0$ give $\widehat{\mathcal{H}}|_{x_1=0}\neq 0$.
That is, it is natural to assume that the unperturbed vacuum magnetic field is non-zero on the boundary, i.e. $\widehat{\mathcal{H}}|_{x_1=0}\neq 0$. On the other hand, this assumption and the collinearity condition \eqref{collin} imply  $\widehat{H}|_{x_1=0}\neq 0$ that is, of course, a restriction on the unperturbed plasma magnetic field. It is still an open problem how to derive the a priori estimate \eqref{est} for the case when $\widehat{H}|_{x_1=0}=0$ (or it vanishes somewhere on the boundary) and $\widehat{\mathcal{H}}|_{x_1=0}\neq 0$ (we again refer to Section \ref{s5} for further discussions).

Regarding the case when the unperturbed plasma and vacuum magnetic fields are collinear on the boundary but the Rayleigh-Taylor sign condition \eqref{39} fails, we have the following Theorem for the linearized problem with frozen coefficients.

\begin{theorem}
The linearized plasma-vacuum interface problem with frozen coefficients (see problem \eqref{frozen}) is ill-posed if and only if the constant magnetic fields $\widehat{H}=(0,\widehat{H}_2,\widehat{H}_3)$ and $\widehat{\mathcal{H}}=(0,\widehat{\mathcal{H}}_2,\widehat{\mathcal{H}}_3)$ are collinear,
\begin{equation}
\widehat{H}_2\widehat{\mathcal{H}}_3-\widehat{H}_3\widehat{\mathcal{H}}_2=0,
\label{collin-fr}
\end{equation}
and the constant (frozen) coefficient $[\partial_1\hat{q}]$ is negative (i.e. when the Rayleigh-Taylor sign condition fails):
\begin{equation}
[\partial_1\hat{q}]<0.
\label{antiRT}
\end{equation}
\label{t2.2}
\end{theorem}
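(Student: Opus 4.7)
The plan is to prove the ``if'' direction by explicitly exhibiting a sequence of Hadamard-type exponentially growing normal modes, and to deduce the ``only if'' direction from the existing well-posedness estimates (Theorem~\ref{t2.1} of this paper and the Kreiss-type estimates of \cite{ST1,Tjde}), both of which preclude such modes. After freezing the coefficients of \eqref{34'new} at a point of the interface and, without loss of generality, passing to a Galilean frame where $\hat v=0$ there, I seek plane-wave solutions
\begin{equation*}
(\mathcal{U},\mathcal{H},\varphi)(t,x)=e^{\tau t+i\xi'\cdot x'}\bigl(\widetilde{\mathcal{U}}(x_1),\widetilde{\mathcal{H}}(x_1),\widetilde\varphi\bigr),\qquad \xi'=(\xi_2,\xi_3),\qquad \mathrm{Re}\,\tau>0,
\end{equation*}
with $\widetilde{\mathcal{U}}\to 0$ as $x_1\to+\infty$ and $\widetilde{\mathcal{H}}\to 0$ as $x_1\to-\infty$. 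Ill-posedness will then follow if, for each $k>0$, I can produce such a mode with $|\xi'|=k$ and $\mathrm{Re}\,\tau\gtrsim\sqrt{k}$, since no Sobolev-type estimate with finite loss of derivatives can be compatible with such growth.

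In the vacuum half-space $x_1<0$ the elliptic system \eqref{36'bnew} makes each component of $\mathcal{H}$ harmonic; combined with decay at $-\infty$ this pins $\widetilde{\mathcal{H}}$ down to the one-parameter family
\begin{equation*}
\widetilde{\mathcal{H}}(x_1)=a\,e^{|\xi'|x_1}\bigl(1,\,i\xi_2/|\xi'|,\,i\xi_3/|\xi'|\bigr),\qquad a\in\mathbb{C}.
\end{equation*}
Exploiting the collinearity \eqref{collin-fr}, I now choose $\xi'$ orthogonal to the common direction of $\widehat H=(0,\widehat H_2,\widehat H_3)$ and $\widehat{\mathcal{H}}=(0,\widehat{\mathcal{H}}_2,\widehat{\mathcal{H}}_3)$, so that $\widehat H\cdot i\xi'=\widehat{\mathcal{H}}\cdot i\xi'=0$. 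The interface condition \eqref{37'new} then reads $a=i(\xi_2\widehat{\mathcal{H}}_2+\xi_3\widehat{\mathcal{H}}_3)\widetilde\varphi=0$, so the vacuum field is forced to vanish identically, and \eqref{35'dnew} collapses to $\widetilde q|_{x_1=0}=-[\partial_1\hat q]\widetilde\varphi$.

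On the plasma side the same orthogonality annihilates the terms $\widehat h\cdot\nabla$ in the coefficient matrices of \eqref{34'''}, so the interior symbol decouples into a scalar compressible (acoustic) block and magnetic blocks which no longer couple to the acoustic mode. The ansatz $\widetilde{\mathcal{U}}(x_1)=e^{-\lambda x_1}\mathcal{U}_0$ with $\mathrm{Re}\,\lambda>0$ thus reduces to a scalar second-order ODE for the total pressure whose decaying branch $\lambda=\lambda(\tau,|\xi'|)$ is uniquely determined and, together with the remaining interior equations, fixes a proportionality between $\widetilde v_1|_{x_1=0}$ and $\widetilde q|_{x_1=0}$. Feeding this into the kinematic boundary condition $\tau\widetilde\varphi=\widetilde v_1|_{x_1=0}$ together with $\widetilde q|_{x_1=0}=-[\partial_1\hat q]\widetilde\varphi$ produces a dispersion relation whose leading-order behaviour as $|\xi'|\to\infty$ is the classical Rayleigh--Taylor law
\begin{equation*}
\tau^2\simeq -[\partial_1\hat q]\,|\xi'|.
\end{equation*}
Under \eqref{antiRT} this has the positive real root $\tau\simeq\sqrt{|[\partial_1\hat q]|\,|\xi'|}$, and letting $|\xi'|\to\infty$ furnishes the required sequence of modes.

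The converse direction is immediate: if \eqref{collin-fr} fails then the interface symbol is elliptic and the arguments of \cite{ST1,Tjde} provide a Kreiss-type $L^2$ estimate already at the frozen-coefficient level, while if $[\partial_1\hat q]\ge 0$ Theorem~\ref{t2.1}, specialised to constant coefficients, yields \eqref{est}; either estimate is incompatible with the existence of exponentially amplifying plane waves. The main obstacle in the whole plan is the selection of the correct decaying branch $\lambda(\tau,|\xi'|)$ of the $8\times 8$ plasma-side symbol and the verification that it is the acoustic one contributing the $\sqrt{|\xi'|}$ growth; this is what the orthogonality $\widehat H\cdot\xi'=0$ buys us, since it removes the Alfv\'en modes from the calculation and turns the dispersion relation into an essentially scalar computation.
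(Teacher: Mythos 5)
Your ``if'' direction is essentially the computation the paper performs: choose the tangential wave vector orthogonal to the common direction of the collinear fields, observe that the Alfv\'en coupling drops out, and recover the Rayleigh--Taylor dispersion law $\tau^2\simeq -[\partial_1\hat q]\,|\xi'|$. Two small remarks there: the vacuum field is \emph{not} forced to vanish, because the frozen boundary condition \eqref{a37'new_2} carries the zero-order term $\hat a_1\varphi$, so $a=\hat a_1\widetilde\varphi\neq0$ in general; what saves you is that $\mathcal{H}'$ on the boundary is parallel to $i\xi'$ and hence orthogonal to $\widehat{\mathcal{H}}'$, so the term $\widehat{\mathcal{H}}\cdot\mathcal{H}$ still drops out of \eqref{a35'dnew_2}. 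Also, the plasma-side reduction you sketch does go through (with the effective sound speed $\hat c^2+\hat c_{\rm A}^2$), and matches the paper's equation \eqref{eq-for-s"}.

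The genuine gap is in your ``only if'' direction. First, the borderline case of collinear fields with $[\partial_1\hat q]=0$ is covered by neither of your two appeals: Theorem \ref{t2.1} requires $[\partial_1\hat q]\geq\epsilon>0$ (and its constant degenerates as $\epsilon\to0$), and the non-collinearity estimates of \cite{ST1,Tjde} obviously do not apply. The paper handles this case explicitly by showing the offending root is $s=\hat a_0/n$, which gives only uniform-in-$n$ exponential growth, not Hadamard ill-posedness. Second, and more importantly, your appeal to the estimates of \cite{ST1,Tjde} in the non-collinear case quietly assumes that those estimates control the frozen-coefficient problem \emph{including} the retained zero-order boundary coefficients $\hat a,\hat a_0,\hat a_1$ (which do not vanish, since the frozen problem is not the linearization about a constant state), and that they transfer from the bounded slab with the Dirichlet condition on $\Gamma_-$ (where the Poincar\'e inequality for the potential is available) to the half-space. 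The entire point of Section \ref{s4} of the paper is that when the front symbol is not elliptic these zero-order terms are exactly what produce the $O(1/\sqrt n)$ unstable root; so one cannot wave at a Lopatinskii-type estimate and stop. The paper's proof instead completes the normal-mode analysis: it shows that for any root of \eqref{eq-for-s'} with $s_0\neq0$ one has $s_1=0$, hence $\Re s=\mathcal{O}(1/n)$ regardless of the zero-order terms, and only then invokes the $L^2$ estimate of \cite{Tjde} for the homogeneous leading-order problem to exclude roots of \eqref{s_0} with $\Re s_0>0$. You need either to reproduce that two-step reduction or to verify directly that the estimates you cite apply verbatim to problem \eqref{a34'new_2}; as written, the ``only if'' half is not established.
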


Note that, unlike Theorem \ref{t2.1}, in Theorem \ref{t2.2} one of the magnetic fields  $\widehat{H}$ and $\widehat{\mathcal{H}}$ or even both can be zero.\footnote{To say the truth, the proof of Theorem \ref{t2.1} stays valid for $\widehat{\mathcal{H}}|_{x_1=0}\equiv 0$ and any $\widehat{H}$.}

\section{Basic a priori estimate under the Rayleigh-Taylor sign \\condition}
\label{s3}

\subsection{Preparatory estimates}

We now derive some preparatory estimates which are important for the proof of the basic a priori estimate \eqref{est}. Following \cite{Tjde}, we first introduce the scalar potential $\xi$ for the div-curl system \eqref{36'bnew} (see also \cite{ST}; here we keep the notations from \cite{ST}):
\begin{equation}
\mathfrak{H}=\nabla\xi \,.
\label{potent}
\end{equation}
Then, it follows from \eqref{36'bnew}, \eqref{37'new} and \eqref{37'anew} that  $\xi$ satisfies the Neumann-Dirichlet problem (for freezed time $t$ and a given interface perturbation $\varphi$):
\begin{subequations}\label{ND}
\begin{alignat}{2}
\displaystyle
&  {\rm div}\,(A\nabla\xi)=0 &\mbox{in}&\ \Omega^-, \label{ND1}
\\
&  (A\nabla\xi)_1 =-\partial_2\bigl(\widehat{\mathcal{H}}_2\varphi \bigr) -\partial_3\bigl(\widehat{\mathcal{H}}_3\varphi \bigr) &\mbox{on}&\ \Gamma,
\label{ND2}
\\
&  \xi=0  &\qquad\mbox{on}&\ \Gamma_-,\label{ND3}
\\
&  (x_2,x_3)\rightarrow \xi (t,x_1,x_2,x_3) &\mbox{is}&\ \mbox{1-periodic} ,\label{ND4}
\end{alignat}
\end{subequations}
where $A=A(\nabla\widehat\Psi)=(\duno\widehat\Phi_1)^{-1}\hat \eta \, \hat \eta^T$ and $(A\nabla\xi)_1=-\mathcal{H}_N$ is the first component of the vector
$A\nabla\xi =\mathfrak{h}$, with $\hat\eta$ defined in \eqref{defeta}  .

\begin{lemma}
Solutions of problem \eqref{ND}\footnote{In fact, the boundary condition \eqref{ND2} can be ignored here as well as in Lemma \ref{l2}.} satisfy
\begin{equation}
\int_{\Gamma}\xi\mathcal{H}_N\,{\rm d}x'=\int_{\Omega^-}\duno\widehat\Phi_1|\mathcal{H}|^2\,{\rm d}x,
\label{l1.1}
\end{equation}
\begin{equation}
\int_{\Gamma}{\rm coeff}\,\xi\mathcal{H}_N\,{\rm d}x'\leq C\|\mathcal{H}\|^2_{L^2(\Omega^-)},
\label{l1.2}
\end{equation}
\begin{equation}
\int_{\Gamma}{\rm coeff}\,\xi H_N\,{\rm d}x'\leq C\left\{\|\mathcal{H}\|^2_{L^2(\Omega^-)}+\|{H}\|^2_{L^2(\Omega^+)}\right\},
\label{l1.3}
\end{equation}
with $\mathcal{H}$ defined through \eqref{potent} and $H$ satisfying \eqref{93}, and where {\rm coeff} is a coefficient depending smoothly on the basic state \eqref{21}. Moreover, here and later on $C$ is a positive constant that can change from line to line, and it may depend on other constants, in particular, in \eqref{l1.2} and \eqref{l1.3} the constant $C$ depends on $K$ (see \eqref{22}).
\label{l1}
\end{lemma}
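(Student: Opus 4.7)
The plan is to establish all three estimates by divergence-theorem identities in which $\xi$ (or an extension thereof) is used as a test function against either $\mathfrak{h}$ in $\Omega^-$ or the plasma flux $h$ in $\Omega^+$. The key algebraic identity, obtained by multiplying out the definitions of $\mathfrak{h}$ and $\mathfrak{H}$ and using $\mathcal{H}_1=\mathcal{H}_N+\mathcal{H}_2\partial_2\widehat{\Psi}+\mathcal{H}_3\partial_3\widehat{\Psi}$, is the pointwise equality
\[
\mathfrak{h}\cdot\mathfrak{H}=\partial_1\widehat{\Phi}_1\,|\mathcal{H}|^2.
\]

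For \eqref{l1.1}, I integrate $\mathfrak{h}\cdot\nabla\xi=\partial_1\widehat{\Phi}_1|\mathcal{H}|^2$ over $\Omega^-$ and apply the divergence theorem to $\nabla\cdot(\xi\mathfrak{h})$. The interior term vanishes by ${\rm div}\,\mathfrak{h}=0$ from \eqref{36'bnew}, the contribution at $\Gamma_-$ vanishes by \eqref{ND3}, and the lateral contributions cancel by periodicity. On $\Gamma$ the outward normal of $\Omega^-$ is $(1,0,0)$, so $\mathfrak{h}\cdot n=\mathcal{H}_N$, which produces exactly $\int_\Gamma\xi\mathcal{H}_N\,dx'$.

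For \eqref{l1.2}, let $a(t,x')$ denote the smooth coefficient, extended trivially (independent of $x_1$) to $\Omega^-$. Repeating the previous argument with $a\xi$ in place of $\xi$ yields
\[
\int_\Gamma a\,\xi\mathcal{H}_N\,dx'=\int_{\Omega^-}a\,\partial_1\widehat{\Phi}_1|\mathcal{H}|^2\,dx+\int_{\Omega^-}\xi\,\mathfrak{h}\cdot\nabla a\,dx.
\]
The first term is bounded by $C\|\mathcal{H}\|_{L^2(\Omega^-)}^2$ thanks to \eqref{22}. For the second I invoke Cauchy--Schwarz together with the Poincaré inequality $\|\xi\|_{L^2(\Omega^-)}\le C\|\nabla\xi\|_{L^2(\Omega^-)}\le C\|\mathcal{H}\|_{L^2(\Omega^-)}$, valid because $\xi=0$ on $\Gamma_-$ and $|\mathfrak{H}|\le C|\mathcal{H}|$ pointwise.

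For \eqref{l1.3} I migrate to the plasma side: pick a bounded linear extension $\tilde\xi\in H^1(\Omega^+)$ of the trace $\xi|_\Gamma$, so that $\|\tilde\xi\|_{H^1(\Omega^+)}\le C\|\xi\|_{H^1(\Omega^-)}\le C\|\mathcal{H}\|_{L^2(\Omega^-)}$. Applying the divergence theorem to $\nabla\cdot(a\tilde\xi\,h)$ on $\Omega^+$ and using \eqref{93}, I observe that on $\Gamma^+$ both $\widehat{\Psi}=0$ (since $\chi(1)=0$) and $H_1=0$ from \eqref{95}, hence $H_N=0$ there and the top boundary contributes nothing; only $\Gamma$ remains, with outward normal $(-1,0,0)$. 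This yields
\[
\int_\Gamma a\,\xi H_N\,dx'=-\int_{\Omega^+}h\cdot\nabla(a\tilde\xi)\,dx,
\]
and Cauchy--Schwarz followed by Young's inequality closes the bound. The only substantive care lies in tracking the orientation of the outward normals on $\Gamma$ from each side and in verifying the vanishing of the $\Gamma^\pm$ contributions; no genuine obstacle appears, since the elliptic problem \eqref{ND} supplies enough regularity for $\xi$ to justify the integrations by parts.
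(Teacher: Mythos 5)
Your proposal is correct and follows essentially the same route as the paper: multiply the relevant divergence‑free relation (${\rm div}\,\mathfrak{h}=0$, resp.\ ${\rm div}\,h=0$) by $\xi$ or ${\rm coeff}\,\xi$, integrate by parts using the conditions on $\Gamma_\pm$ and periodicity, note that $\mathfrak{h}\cdot\nabla\xi=A\nabla\xi\cdot\nabla\xi=\partial_1\widehat\Phi_1|\mathcal{H}|^2$, and close with the Poincar\'e inequality for $\xi$. The only cosmetic difference is that for \eqref{l1.3} you invoke a generic bounded $H^1$ extension of $\xi|_\Gamma$ to $\Omega^+$ where the paper uses the explicit reflection $x_1\mapsto -x_1$; both serve the same purpose.
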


\begin{proof}
Multiplying \eqref{ND1} by $\xi$, we easily get:
\[
\div(\xi A\nabla\xi ) = A\nabla\xi\cdot\nabla\xi = \hat\eta\mathcal{H} \cdot A^{-1}\hat\eta\mathcal{H}=\duno\widehat\Phi_1|\mathcal{H}|^2.
\]
Integrating the last identity over the domain $\Omega^-$ and taking into account \eqref{ND3} and \eqref{ND4}, we obtain \eqref{l1.1}.
If the coefficient ${\rm coeff}= {\rm coeff}(t,x)$ is defined in $\Omega^+$, we make the change from $x_1$ to $-x_1$ in it. Multiplying then \eqref{ND1} by ${\rm coeff}\,\xi$, integrating the result over the domain $\Omega^-$, using integration by parts, and taking again \eqref{ND3} and \eqref{ND4} into account, we come to the estimate
\begin{equation}
\int_{\Gamma}{\rm coeff}\,\xi\mathcal{H}_N\,{\rm d}x'\leq C\left\{\|\mathcal{H}\|^2_{L^2(\Omega^-)}+ \|\xi \|^2_{L^2(\Omega^-)}\right\},
\label{l1.2'}
\end{equation}
For $\xi$ having properties \eqref{ND3} and \eqref{ND4} the Poincar\'{e} inequality $\|\xi\|_{L^2(\Omega^-)}\leq C\|\nabla\xi\|_{L^2(\Omega^-)}$ applies. Using it, from \eqref{l1.2'} and \eqref{potent} we derive the desired estimate \eqref{l1.2}. At last, for obtaining estimate \eqref{l1.3} we multiply \eqref{93} by  ${\rm coeff}\,\xi$ (we make the change from $x_1$ to $-x_1$ in coeff if necessary) and then apply the same arguments as above.
\end{proof}

\begin{lemma}
Solutions of the div-curl system \eqref{36'bnew} and the boundary conditions \eqref{37'anew} satisfy the inequality
\begin{equation}
\int_{\Gamma}{\rm coeff}\,\mathcal{H}_N\mathcal{H}_{\tau_k}\,{\rm d}x'\leq C\|\mathcal{H}\|^2_{L^2(\Omega^-)}
\label{l2.1}
\end{equation}
for $k=2$ and $k=3$, where $C=C(K)>0$ is a constant.
\label{l2}
\end{lemma}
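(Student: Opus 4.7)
Following the strategy of Lemma~\ref{l1}, I would introduce the scalar potential $\xi$ from \eqref{potent} and exploit the divergence form of the vacuum problem together with the Dirichlet condition $\xi|_{\Gamma_-}=0$. Since $\mathfrak{H}=\nabla\xi$, the tangential components satisfy $\mathcal{H}_{\tau_k}=\mathfrak{H}_k=\partial_k\xi$ for $k=2,3$, so the integrand equals ${\rm coeff}\,\mathfrak{h}_1\,\partial_k\xi$ on $\Gamma$. I would multiply the identity $\div\mathfrak{h}=0$ by the test function ${\rm coeff}\,\partial_k\xi$ and integrate over $\Omega^-$. The boundary contribution on $\Gamma_-$ vanishes because $\xi|_{\Gamma_-}=0$ forces $\partial_k\xi|_{\Gamma_-}=0$ for $k\in\{2,3\}$, and the contribution on $\Gamma$ is exactly the quantity to be estimated. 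Integration by parts then yields
\[
\int_\Gamma {\rm coeff}\,\partial_k\xi\,\mathcal{H}_N\,{\rm d}x' = \int_{\Omega^-}\nabla({\rm coeff}\,\partial_k\xi)\cdot\mathfrak{h}\,{\rm d}x = J_1 + J_2,
\]
with $J_1 = \int_{\Omega^-}(\nabla{\rm coeff}\cdot\mathfrak{h})\,\partial_k\xi\,{\rm d}x$ and $J_2 = \int_{\Omega^-}{\rm coeff}\,\partial_k\mathfrak{H}\cdot\mathfrak{h}\,{\rm d}x$ (using $\partial_i\partial_k\xi=\partial_k\mathfrak{H}_i$). The term $J_1$ is immediately bounded by $C\|\mathcal{H}\|^2_{L^2(\Omega^-)}$ via Cauchy--Schwarz and the pointwise bounds $|\mathfrak{h}|,|\mathfrak{H}|\leq C|\mathcal{H}|$ coming from $\mathfrak{h}=\hat{\eta}\mathcal{H}$ and $\partial_k\xi=\mathfrak{H}_k$.

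The crucial step is bounding $J_2$. The matrix $A=\hat{\eta}\hat{\eta}^T/\partial_1\widehat{\Phi}_1$ from Lemma~\ref{l1} is symmetric and satisfies $\mathfrak{h}=A\mathfrak{H}$; hence
\[
\mathfrak{H}\cdot\partial_k\mathfrak{h} = A\mathfrak{H}\cdot\partial_k\mathfrak{H} + (\partial_k A)\mathfrak{H}\cdot\mathfrak{H} = \mathfrak{h}\cdot\partial_k\mathfrak{H} + (\partial_k A)\mathfrak{H}\cdot\mathfrak{H},
\]
so $\partial_k(\mathfrak{H}\cdot\mathfrak{h}) = 2\,\mathfrak{h}\cdot\partial_k\mathfrak{H} + (\partial_k A)\mathfrak{H}\cdot\mathfrak{H}$. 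Together with the identity $\mathfrak{H}\cdot\mathfrak{h}=\partial_1\widehat{\Phi}_1|\mathcal{H}|^2$ already established in the proof of Lemma~\ref{l1}, this yields
\[
2J_2 = \int_{\Omega^-}{\rm coeff}\,\partial_k(\partial_1\widehat{\Phi}_1|\mathcal{H}|^2)\,{\rm d}x - \int_{\Omega^-}{\rm coeff}\,(\partial_k A)\mathfrak{H}\cdot\mathfrak{H}\,{\rm d}x.
\]
An integration by parts in $x_k$ on the first integral produces no boundary terms, since the outward normal to $\Omega^-$ has zero $x_k$-component on both $\Gamma$ and $\Gamma_-$; this leaves $-\int_{\Omega^-}\partial_k{\rm coeff}\,\partial_1\widehat{\Phi}_1|\mathcal{H}|^2\,{\rm d}x$. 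Both resulting integrals are bounded by $C\|\mathcal{H}\|^2_{L^2(\Omega^-)}$, completing \eqref{l2.1}.

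The main obstacle I expect is that one cannot simply integrate by parts tangentially on $\Gamma$: doing so would produce $\int_\Gamma{\rm coeff}\,\partial_k\mathcal{H}_N\,\xi\,{\rm d}x'$, but $\mathcal{H}_N$ is only the normal trace of the divergence-free $L^2$ field $\mathfrak{h}$, hence in $H^{-1/2}(\Gamma)$; its tangential derivative lies in $H^{-3/2}(\Gamma)$ and cannot be paired with $\xi|_\Gamma\in H^{1/2}(\Gamma)$. The resolution outlined above is to perform every integration by parts inside the bulk $\Omega^-$, and to use the symmetry of $A$ as a commutator trick to trade $\partial_k\mathfrak{H}\cdot\mathfrak{h}$ for $\mathfrak{h}\cdot\partial_k\mathfrak{H}$ modulo the controlled lower-order term $(\partial_k A)\mathfrak{H}\cdot\mathfrak{H}$.
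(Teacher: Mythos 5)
Your argument is correct and is essentially the paper's own proof: both multiply the divergence-free vacuum equation by ${\rm coeff}\,\partial_k\xi$, integrate over $\Omega^-$, and use the symmetry of $A=\hat\eta\,\hat\eta^T/\partial_1\widehat\Phi_1$ to rewrite $\mathfrak{h}\cdot\partial_k\mathfrak{H}=A\nabla\xi\cdot\nabla\partial_k\xi$ as $\tfrac12\partial_k(A\nabla\xi\cdot\nabla\xi)$ plus a zeroth-order term in $\nabla\xi$, after which periodicity in $x_k$ and the conditions on $\Gamma_-$ kill the unwanted boundary contributions. Your "commutator trick" is exactly the paper's identity \eqref{l2.2}, merely written in the variables $\mathfrak{h},\mathfrak{H}$ instead of $A\nabla\xi,\nabla\xi$.
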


\begin{proof}
It is convenient (but not necessary) to use the scalar form \eqref{ND1} of the div-curl system \eqref{36'bnew}. Since the matrix $A$ is symmetric, we can derive the equality
\begin{equation}
A\nabla\xi\cdot\partial_k\xi= \frac{1}{2}\partial_k(A\nabla\xi\cdot\nabla\xi ) - \frac{1}{2}(\partial_kA)\nabla\xi\cdot\nabla\xi ,\qquad k=2,3.\label{l2.2}
\end{equation}
Multiplying then \eqref{ND1} by ${\rm coeff}\,\partial_k\xi$ and using \eqref{l2.2}, we obtain:
\begin{multline}
{\rm div} \left\{{\rm coeff}\,(\partial_k\xi) A\nabla\xi \right\} =(\partial_k\xi) A\nabla\xi\cdot\nabla{\rm coeff} + \frac{1}{2}\partial_k({\rm coeff}\,A\nabla\xi\cdot\nabla\xi )  \\[6pt] -\frac{1}{2}(\partial_k{\rm coeff})A\nabla\xi\cdot\nabla\xi - \frac{1}{2}{\rm coeff}(\partial_kA)\nabla\xi\cdot\nabla\xi .
\label{l2.3}
\end{multline}
Integrating \eqref{l2.3} over the domain $\Omega^-$, using the boundary conditions \eqref{37'anew} (see also \eqref{ND3} and \eqref{ND4}) and returning from $\xi$ to $\mathcal{H}$, we get \eqref{l2.1}.
\end{proof}

\subsection{Proof of Theorem \ref{t2.1}}

The a priori estimate \eqref{est} was managed to be derived in \cite{Tjde} only for the case of frozen coefficients. On the other hand, all of the main steps of the proof of estimate \eqref{est} except the final ones were performed in \cite{Tjde} for variable coefficients. Here, however, we  prefer to describe all the details of the proof and not just refer to \cite{Tjde} because, unlike \cite{Tjde}, the domains $\Omega^{\pm}$ are bounded and we have to modify several places in \cite{Tjde}. The more so, that the boundedness of the domain $\Omega^-$ plays a crucial role in ``closing'' the estimate \eqref{est} for variable coefficients because we have to use the Poincar\'{e} inequality for the potential $\xi$ (in fact, we have already used it in the proof of estimates \eqref{l1.2} and \eqref{l1.3} in Lemma \ref{l1}).

By standard arguments of the energy method we obtain for the hyperbolic system \eqref{34'anew}  the energy inequality
\begin{equation}
I(t)+ 2\int_{\omega_t}\mathcal{Q}\,{\rm d}x'{\rm d}s\leq C
\left( \| \mathcal{F}\|^2_{L^2(Q_T^+)} +\int_0^tI(s)\,{\rm d}s\right),
\label{55}
\end{equation}
where
\[
\omega_t =(-\infty ,t]\times\Gamma ,\quad
I(t)=\int_{\Omega^+}(\widehat{\mathcal{A}}_0\mathcal{U}\cdot\mathcal{U})\,{\rm d}x,\quad
\mathcal{Q}(s,x')=-\frac{1}{2}\bigl((\widehat{\mathcal{A}}_1+{\mathcal{E}}_{12})\mathcal{U}\cdot\mathcal{U}\bigr)|_{x_1=0},
\]
and, in view of \eqref{a10},
\[
\mathcal{Q}=-\frac{1}{2}({\mathcal{E}}_{12}\,\mathcal{U}\cdot\mathcal{U})|_{\omega_T}= -qv_N|_{\omega_T}
\]
While obtaining inequality \eqref{55} we used the boundary condition \eqref{37''anew} and the periodicity condition with respect to $x_2$ and $x_3$. Namely, the boundary integral
\[
\int_{\omega^+_t}\bigl((\widehat{\mathcal{A}}_1+{\mathcal{E}}_{12})\mathcal{U}\cdot\mathcal{U}\bigr)\,{\rm d}x'\,{\rm d}s
\]
(with $\omega^+_t =(-\infty ,t]\times\Gamma_+ $) vanishes thanks to \eqref{a10} and the boundary condition \eqref{37''anew} whereas the boundary integrals over the opposite sides of the unit $(x_2,x_3)$-square cancel out (below we will not comment the role of periodical boundary conditions anymore).

It follows from the boundary condition \eqref{35'dnew} that
\begin{equation}
\mathcal{Q}=-{q}{v}_N|_{\omega_T}=[\partial_1\hat{q}]\varphi {v}_N|_{\omega_T}
-(\widehat{\mathcal{H}}\cdot {\mathcal{H}}){v}_N|_{\omega_T}.
\label{56}
\end{equation}
In view of \eqref{35"},
\begin{align}
2[\partial_1\hat{q}]\varphi {v}_N|_{\omega_T} &= 2[\partial_1\hat{q}]\varphi (\partial_t\varphi+\hat{v}_2\partial_2\varphi+\hat{v}_3\partial_3\varphi -
\varphi\,\partial_1\hat{v}_{N})|_{\omega_T} \nonumber\\
 & = \partial_t\left\{ [\partial_1\hat{q}]\varphi^2\right\}+\partial_2\left\{ \hat{v}_2|_{\omega_T}\,[\partial_1\hat{q}]\varphi^2 \right\} +
\partial_3\left\{ \hat{v}_3|_{\omega_T}\,[\partial_1\hat{q}]\varphi^2  \right\}\nonumber
\\
 & \quad\; -\bigl\{\partial_t([\partial_1\hat{q}])+\partial_2(\hat{v}_2[\partial_1\hat{q}])+\partial_3(\hat{v}_3[\partial_1\hat{q}])
+2[\partial_1\hat{q}]\partial_1\hat{v}_{N}\bigr\}\bigr|_{\omega_T}\,\varphi^2.
\label{57}
\end{align}
Using the Rayleigh-Taylor sign condition \eqref{39}, from \eqref{55}--\eqref{57} we obtain
\begin{equation}
I(t)+ \epsilon \|\varphi (t)\|^2_{L^2(\Gamma )} + 2\int_{\omega_t}\widetilde{\mathcal{Q}}\,{\rm d}x'{\rm d}s \leq C
\left( \| \mathcal{F}\|^2_{L^2(Q_T^+)} +\int_0^t\left(I(s)
+\|\varphi (s)\|^2_{L^2(\Gamma )}\right){\rm d}s\right),
\label{58}
\end{equation}
where, in view of the boundary condition \eqref{35"},
\begin{equation}
\label{tQ}
\begin{split}
& \widetilde{\mathcal{Q}} =-(\widehat{\mathcal{H}}\cdot {\mathcal{H}}){v}_N|_{\omega_T}=\widetilde{\mathcal{Q}}_1+\widetilde{\mathcal{Q}}_2+\widetilde{\mathcal{Q}}_3,\\
& \widetilde{\mathcal{Q}}_1 =-(\widehat{\mathcal{H}}\cdot {\mathcal{H}})\partial_t\varphi|_{\omega_T},\quad
\widetilde{\mathcal{Q}}_2 =-(\hat{\mathcal{H}}\cdot {\mathcal{H}})(\hat{v}'\cdot\nabla'\varphi)|_{\omega_T},\quad
\widetilde{\mathcal{Q}}_3 =(\partial_1\hat{v}_N)(\hat{\mathcal{H}}\cdot {\mathcal{H}})\varphi|_{\omega_T},
\end{split}
\end{equation}
with $\hat{v}'=(\hat{v}_2,\hat{v}_3)$ and $\nabla'=(\partial_2,\partial_3)$.

Noticing that $\widehat{\mathcal{H}}\cdot\mathcal{H}=\hat{\mathfrak{h}}\cdot\mathfrak{H}$ and $\hat{\mathfrak{h}}_1|_{\omega_T}=\widehat{\mathcal{H}}_N|_{\omega_T}=0$  and passing to the potential $\xi$ (see \eqref{potent}), we rewrite $\widetilde{\mathcal{Q}}_1$:
\begin{equation}
\label{tQ1}
\begin{split}
& \widetilde{\mathcal{Q}}_1=-\partial_t\varphi \,(\widehat{\mathcal{H}}'\cdot\nabla'\xi )|_{\omega_T} = \widetilde{\mathcal{Q}}_{11}+\widetilde{\mathcal{Q}}_{12},\\
& \widetilde{\mathcal{Q}}_{11}= -\bigl(\partial_t(\varphi\widehat{\mathcal{H}}' )\cdot\nabla'\xi\bigr)|_{\omega_T},\quad \widetilde{\mathcal{Q}}_{12} = \varphi (\partial_t\widehat{\mathcal{H}}'\cdot\nabla'\xi )|_{\omega_T},
\end{split}
\end{equation}
where  $\widehat{\mathcal{H}}'=(\widehat{\mathcal{H}}_2,\widehat{\mathcal{H}}_3)$. Integrating by parts and using the boundary condition
\eqref{37'new} and equality \eqref{l1.1}, we get
\begin{multline}
\int_{\omega_t}\widetilde{\mathcal{Q}}_{11}\,{\rm d}x'{\rm d}s=
-\int_{\omega_t}\bigl(\partial_t(\varphi\widehat{\mathcal{H}}' )\cdot\nabla'\xi\bigr)\,{\rm d}x'{\rm d}s\\
=
\int_{\omega_t}\xi\partial_t\bigl({\rm div}'(\varphi\widehat{\mathcal{H}}')\bigr)\,{\rm d}x'{\rm d}s=\int_{\omega_t}\xi\partial_t\mathcal{H}_N\,{\rm d}x'{\rm d}s
=\int_{\Gamma}\xi\mathcal{H}_N\,{\rm d}x'-\int_{\omega_t}\mathcal{H}_N\partial_t\xi\,{\rm d}x'{\rm d}s \\= \int_{\Omega^-}\duno\widehat\Phi_1|\mathcal{H}|^2\,{\rm d}x-\int_{Q_t^-}\partial_1\bigl(\mathcal{H}_N\partial_t\xi\bigr)\,{\rm d}x{\rm d}s ,
\label{q11}
\end{multline}
where ${\rm div}'$ is the tangential divergence operator, i.e. ${\rm div}'\,b':=\partial_2b_2+\partial_3b_3$ for $b'=(b_2,b_3)$.

Taking into account \eqref{93} and \eqref{potent}, the integrand of the last integral in \eqref{q11} reads:
\begin{multline}
\partial_1\bigl(\mathcal{H}_N\partial_t\xi\bigr)=-\partial_t\xi\,{\rm div}'(\mathcal{H}'\duno\widehat\Phi_1) +\mathcal{H}_N\partial_t(\mathcal{H}_1\duno\widehat\Phi_1)\\ =-{\rm div}'\bigl( (\partial_t\xi )(\duno\widehat\Phi_1)\mathcal{H}'\bigr) +\duno\widehat\Phi_1 \bigl( \mathcal{H}'\cdot \partial_t(\mathcal{H}_1\nabla'\widehat{\Psi} +\mathcal{H}')\bigr) +\mathcal{H}_N\partial_t(\mathcal{H}_1\duno\widehat\Phi_1)\\
=-{\rm div}'\bigl( (\partial_t\xi )(\duno\widehat\Phi_1)\mathcal{H}'\bigr) +\frac{1}{2}\partial_t\bigl(\duno\widehat\Phi_1|\mathcal{H}|^2\bigr)+\frac{R}{2},
\label{q11'}
\end{multline}
where
\begin{equation}
\label{rest}
R=(\partial_t\duno\widehat\Phi_1)(\mathcal{H}_1^2-|\mathcal{H}'|^2)+2\left( \left\{(\duno\widehat\Phi_1)(\partial_t\nabla'\widehat{\Psi})-(\partial_t\duno\widehat\Phi_1)(\nabla'\widehat{\Psi})\right\}\cdot\mathcal{H}'
\right)\mathcal{H}_1\leq C|\mathcal{H}|^2.
\end{equation}
It follows from \eqref{q11}--\eqref{rest} that
\begin{multline}
\label{q11"}
2\int_{\omega_t}\widetilde{\mathcal{Q}}_{11}\,{\rm d}x'{\rm d}s=\int_{\Omega^-}\duno\widehat\Phi_1|\mathcal{H}|^2\,{\rm d}x-\int_{Q_t^-}R\,{\rm d}x{\rm d}s \\ \geq \frac{1}{2}\|\mathcal{H} (t)\|^2_{L^2(\Omega^- )} -C\int_0^t\|\mathcal{H} (s)\|^2_{L^2(\Omega^- )} {\rm d}s
\end{multline}
(recall that $\duno\widehat\Phi_1\geq 1/2$).

The integral of $\widetilde{\mathcal{Q}}_{12}$ will be below treated together with that of  $\widetilde{\mathcal{Q}}_{3}=(\partial_1\hat{v}_N)(\widehat{\mathcal{H}}'\cdot\nabla'\xi )\varphi|_{\omega_T}$ (see \eqref{tQ} and \eqref{tQ1}), and we now consider $\widetilde{\mathcal{Q}}_{2}$:
\begin{align}
\label{tQ2}
\widetilde{\mathcal{Q}}_{2} &=- (\widehat{\mathcal{H}}'\cdot\nabla'\xi )(\hat{v}'\cdot\nabla'\varphi)|_{\omega_T} \nonumber\\
 & =-(\hat{v}'\cdot\nabla'\xi)(\widehat{\mathcal{H}}'\cdot\nabla'\varphi )|_{\omega_T}  -
(\hat{v}_2\widehat{\mathcal{H}}_3-\hat{v}_3\widehat{\mathcal{H}}_2)(\partial_3\xi\partial_2\varphi -\partial_2\xi\partial_3\varphi )|_{\omega_T} \nonumber\\ & =\widetilde{\mathcal{Q}}_{21}+\widetilde{\mathcal{Q}}_{22},
\end{align}
where
\begin{equation}
\label{tQ2'}
\begin{split}
 & \widetilde{\mathcal{Q}}_{21}=-(\hat{v}'\cdot\nabla'\xi)\,{\rm div}'(\varphi\widehat{\mathcal{H}}')|_{\omega_T},\\
 & \widetilde{\mathcal{Q}}_{22}=\left.\left((\hat{v}'\cdot\nabla'\xi)\,\varphi\, {\rm div}'\widehat{\mathcal{H}}'-
(\hat{v}_2\widehat{\mathcal{H}}_3-\hat{v}_3\widehat{\mathcal{H}}_2)\bigl(\partial_2(\varphi\partial_3\xi ) -\partial_3(\varphi \partial_2\xi )\bigr)\right)\right|_{\omega_T}.
\end{split}
\end{equation}
Using \eqref{potent}, the boundary condition \eqref{37'new} and inequality \eqref{l2.1}, we estimate the integral of $\widetilde{\mathcal{Q}}_{21}$:
\begin{equation}
\label{tQ21}
\int_{\omega_t}\widetilde{\mathcal{Q}}_{21}\,{\rm d}x'{\rm d}s= -\int_{\omega_t}(\hat{v}_2\mathcal{H}_N\mathcal{H}_{\tau_2}+\hat{v}_3\mathcal{H}_N\mathcal{H}_{\tau_3})\,{\rm d}x'{\rm d}s\geq
-C\int_0^t\|\mathcal{H} (s)\|^2_{L^2(\Omega^- )} {\rm d}s.
\end{equation}
Integration by parts gives
\begin{equation}
\label{tQ22}
\int_{\omega_t}\widetilde{\mathcal{Q}}_{22}\,{\rm d}x'{\rm d}s =\int_{\omega_t}\widehat{\mathcal{Q}}_{22}\,{\rm d}x'{\rm d}s,
\end{equation}
with
\[
\widehat{\mathcal{Q}}_{22}=
\left.\left( \partial_2 (\hat{v}_2\widehat{\mathcal{H}}_3-\hat{v}_3\widehat{\mathcal{H}}_2)\partial_3\xi - \partial_3 (\hat{v}_2\widehat{\mathcal{H}}_3-\hat{v}_3\widehat{\mathcal{H}}_2)\partial_2\xi +(\hat{v}'\cdot\nabla'\xi)\,{\rm div}'\widehat{\mathcal{H}}'\right)\right|_{\omega_t}\varphi .
\]

It follows from \eqref{58}--\eqref{tQ1} and \eqref{q11"}--\eqref{tQ22} that
\begin{multline}
I(t)+ \epsilon \|\varphi (t)\|^2_{L^2(\Gamma )} +\frac{1}{2}\|\mathcal{H} (t)\|^2_{L^2(\Omega^- )}\leq  \mathcal{N}(t) \\  +C
\left( \| \mathcal{F}\|^2_{L^2(Q_T^+)} +\int_0^t\left(I(s)
+\|\varphi (s)\|^2_{L^2(\Gamma )}+\|\mathcal{H} (s)\|^2_{L^2(\Omega^- )}\right){\rm d}s\right),
\label{58"}
\end{multline}
where
\begin{equation}
\mathcal{N}(t)=-2\int_{\omega_t}(\widetilde{\mathcal{Q}}_{12}+\widehat{\mathcal{Q}}_{22}+\widetilde{\mathcal{Q}}_{3})\,{\rm d}x'{\rm d}s =
-\int_{\omega_t}(\hat{a}'\cdot \nabla'\xi)\,\varphi \,{\rm d}x'{\rm d}s
\label{nt}
\end{equation}
and after simple algebra we find that
\[
\hat{a}'=2\left.\left( \partial_t\widehat{\mathcal{H}}'+(\hat{v}'\cdot\nabla')\widehat{\mathcal{H}}'-(\widehat{\mathcal{H}}'\cdot\nabla')\hat{v}'+
\widehat{\mathcal{H}}'{\rm div}\hat{u}\right) \right|_{\omega_t}.
\]
Assumption \eqref{26} and the collinearity condition \eqref{collin} implies
\begin{equation}
\hat{a}'=\hat{\kappa}\widehat{H}',
\label{a'}
\end{equation}
with $\hat{\kappa} =2\,({\rm d}\hat{\beta} /{\rm d}t)$.

Integrating \eqref{nt} by parts and using \eqref{a'} and \eqref{95'}, we obtain
\begin{equation}
\mathcal{N}(t)=\int_{\omega_t}(\widehat{H}'\cdot \nabla'\hat{\kappa} ) \xi\varphi\,{\rm d}x'{\rm d}s+\int_{\omega_t}\hat{\kappa}\xi H_N \,{\rm d}x'{\rm d}s\,.
\label{nt'}
\end{equation}
For estimating the first integral in \eqref{nt'} we apply the Poincar\'{e} inequality and \eqref{potent} (as in the proof of Lemma \ref{l1}). Using then inequality \eqref{l1.3} for the second integral in \eqref{nt'}, we get the estimate
\begin{equation}
\mathcal{N}(t)\leq C\int_0^t\left(\|{H}(s)\|^2_{L^2(\Omega^+)}+\|\mathcal{H} (s)\|^2_{L^2(\Omega^- )}
+\|\varphi (s)\|^2_{L^2(\Gamma )}\right){\rm d}s.\label{nt-est}
\end{equation}
Combining \eqref{58"} and \eqref{nt-est} gives
\begin{multline}
I(t)+ \|\varphi (t)\|^2_{L^2(\Gamma )} +\|\mathcal{H} (t)\|^2_{L^2(\Omega^- )}  \\  \leq C
\left( \| \mathcal{F}\|^2_{L^2(Q_T^+)} +\int_0^t\left(I(s)
+\|\varphi (s)\|^2_{L^2(\Gamma )}+\|\mathcal{H} (s)\|^2_{L^2(\Omega^- )}\right){\rm d}s\right),
\label{58^}
\end{multline}
where the constant $C$ depends particularly on $\epsilon$. At last, applying the Gronwall lemma and taking into account the positive definiteness of the matrix $\widehat{\mathcal{A}}_0$, from \eqref{58^} we derive the desired a priori estimate \eqref{est}. That is, the proof of Theorem \ref{t2.1} is complete.

\begin{remark}
{\rm
Assume that the Rayleigh-Taylor sign condition \eqref{39} is satisfied together with the non-collinearity condition
\begin{equation}
|\widehat{H}\times\widehat{\mathcal{H}}|\geq \delta >0 \qquad \mbox{on}\  \omega_T
\label{non-collin}
\end{equation}
(see \eqref{non-coll}). Then there exist such {\it bounded} constants $\hat{\kappa}_1$ and $\hat{\kappa}_2$ that $\hat{a}'$ appearing in \eqref{nt} can be represented as
\begin{equation}
\hat{a}'=\hat{\kappa}_1\widehat{H}'+\hat{\kappa}_2\widehat{\mathcal{H}}'
\label{a'-non}
\end{equation}
on $\omega_T$ (cf. \eqref{a'}).
Using then the same arguments as in \eqref{nt'} and \eqref{nt-est} together with inequality \eqref{l1.2}, we again obtain  \eqref{nt-est} which gives us the a priori estimate \eqref{est}. That is, if the non-collinearity condition from \cite{ST,Tjde} is supplemented with the Rayleigh-Taylor sign condition, then we can derive even an $L^2$ basic a priori estimate for the linearized problem. Note that under the fulfilment of the non-collinearity condition but without condition \eqref{39} we are not able to ``close'' the a priori estimate in $L^2$ and have to prolong systems \eqref{34'anew} and \eqref{36'bnew} up to first-order derivatives of $\mathcal{U}$ and $\mathcal{H}$ (see \cite{ST,Tjde} for more details).
}
\label{r3.1}
\end{remark}

\section{Ill-posedness for simultaneous failure of non-collinearity and Rayleigh-Taylor sign conditions}
\label{s4}

\subsection{Rayleigh-Taylor instability detected as ill-posedness for frozen coefficients}

The a priori estimate \eqref{est} for the linearized plasma-vacuum interface problem for which the unperturbed flow satisfies the collinearity condition \eqref{collin} and the Rayleigh-Taylor sign condition \eqref{39} is just the first step towards the proof of the local-in-time existence and uniqueness theorem for the original nonlinear problem when the non-collinearity condition from \cite{ST,Tjde} is violated in some points/regions of the initial interface. We refer the reader to Section \ref{s5} for additional discussions and open problems in this direction. But now there appears the natural question: What happens if both the non-collinearity and Rayleigh-Taylor sign conditions fail in some points/regions of the initial interface? Our hypothesis is that the plasma-vacuum interface problem is not well-posed in this case. However, it is rather difficult and not really necessary to show this on the original nonlinear level and we restrict ourselves to the consideration of the linearized problem.\footnote{We know only one example \cite{Ebin} of the justification of the ill-posedness of a similar but much simpler nonlinear free boundary problem in fluid dynamics. This is the free boundary problem for the incompressible Euler equations with a vacuum boundary condition mentioned in Section \ref{s1}.}

Since the Rayleigh-Taylor sign condition \eqref{39} and the non-collinearity condition \eqref{non-collin} are both conditions on the unperturbed plasma-vacuum interface, using the localization argument, for showing the ill-posedness of problem \eqref{34'new} under the simultaneous failure of these conditions we can consider the linearized problem in the whole space $\mathbb{R}^3$. That is, we consider problem \eqref{34'new} without the (homogeneous) wall boundary conditions \eqref{37''anew} and \eqref{37'anew} and the periodicity conditions:
\begin{subequations}\label{a34'new}
\begin{alignat}{2}
\displaystyle
& \widehat{\mathcal{A}}_0\partial_t{\mathcal{U}}+\sum_{j=1}^{3}(\widehat{\mathcal{A}}_j+{\mathcal{E}}_{1j+1})\partial_j{\mathcal{U}}+
\widehat{\mathcal C}'{\mathcal{U}}=\mathcal{F}
  &\qquad\mbox{in}&\ (-\infty , T]\times \mathbb{R}^3_+,\label{a34'anew}
\\
&  \nabla\times {\mathfrak{H}}=0,\quad {\rm div}\,{\mathfrak{h}}=0 &\qquad\mbox{in}&\ (-\infty , T]\times \mathbb{R}^3_-, \label{a36'bnew}
\\
&  \partial_t\varphi=v_N-\hat{v}_2\partial_2\varphi-\hat{v}_3\partial_3\varphi +
\varphi\,\partial_1\hat{v}_{N},   & & \label{a35"}
\\
&  {q}=\widehat{\mathcal{H}}\cdot{\mathcal{H}}-  [ \partial_1\hat{q}] \varphi ,  & &\label{a35'dnew}
\\
&  {\mathcal{H}}_{N} =\partial_2\bigl(\widehat{\mathcal{H}}_2\varphi \bigr) +\partial_3\bigl(\widehat{\mathcal{H}}_3\varphi \bigr) &\qquad\mbox{on}&\ (-\infty , T]\times \{x_1=0\}\times\mathbb{R}^2,
\label{a37'new}
\\
&  (\mathcal{U},{\mathcal{H}},\varphi )=0 & \qquad\mbox{for}&\ t<0.\label{a38'fnew}
\end{alignat}
\end{subequations}
In fact, the same problem was studied in \cite{Tjde}. Apart from the main boundary conditions \eqref{a35"}--\eqref{a37'new} we again have the boundary constraint
\begin{equation}
{H}_{N}=\widehat{H}_2\partial_2\varphi +\widehat{H}_3\partial_3\varphi -
\varphi\,\partial_1\widehat{H}_{N} \quad\mbox{on}\ (-\infty , T]\times \{x_1=0\}\times\mathbb{R}^2
\label{95b}
\end{equation}
(cf. \eqref{95})
as well as the divergence constraint from \eqref{93} is now satisfied in the half-space $\mathbb{R}^3_+$:
\begin{equation}
 {\rm div}\,{h}=0 \quad\mbox{in}\  (-\infty , T]\times \mathbb{R}^3_+.
\label{93b}
\end{equation}

A direct proof that problem \eqref{a34'new} is ill-posed under the simultaneous failure of conditions \eqref{39} and \eqref{non-collin} is still difficult. On the other hand, the ill-posedness of the corresponding frozen coefficients problem indirectly points out that the variable coefficients problem cannot be well-posed. Indeed, for a wide class of hyperbolic initial boundary value problems for the well-posedness of a variable coefficients linear problem it is necessary and often sufficient that all frozen (constant) coefficients problems are well-posed (here we just refer to the classical paper of Kreiss \cite{Kreiss} and all other references to subsequent results can be found, for example, in \cite{BS,BThand,Maj84,Met}). We do not want to go into details of corresponding results and just notice that the fulfilment of the so-called Lopatinskii (or Kreiss-Lopatinskii \cite{Kreiss}) condition by a constant coefficients problem is necessary for its well-posedness, and the violation of the (weak) Lopatinskii condition is equivalent to the existence of Hadamard-type ill-posedness examples for this problem.

On the other hand, our problem \eqref{a34'new} is not purely hyperbolic as those studied, for example, in \cite{Kreiss} (see also references in \cite{BS,BThand,Maj84,Met}) and its boundary conditions \eqref{a35"}--\eqref{a37'new} contain the unknown front (to be exact, the front/interface perturbation) $\varphi$. In all fairness we should note that, for example,  linearized stability problems for shock waves \cite{BS,BThand,Maj84,Met} also contain an unknown front but, unlike our problem \eqref{a34'new}, the front symbol for shock waves is always elliptic, i.e. one can resolve the boundary conditions for the gradient $(\partial_t\varphi , \partial_2\varphi , \partial_3\varphi ) $. This is crucial for carrying the well-posedness results obtained for the case of constant coefficients under the fulfilment of the uniform Lopatinskii condition (the case of uniform or strong stability) or just the weak Lopatinskii condition (the case of neutral or weak stability) over variable coefficients.

Regarding free boundary problems with characteristic boundaries (e.g. problems for characteristic discontinuities), also for them the weak Lopatinskii condition satisfied for the constant coefficients problem is usually sufficient for the well-posedness of the corresponding variable coefficients problem if the front symbol is elliptic. At least this is so for vortex sheets \cite{CS2} and current-vortex sheets \cite{T09} as well as for our plasma-vacuum problem for the case when the non-collinearity condition \eqref{non-collin} holds \cite{ST}. Condition \eqref{non-collin} enables one to resolve \eqref{a37'new} and \eqref{95b} for  $\partial_2\varphi$ and $\partial_3\varphi$, and then we calculate $\partial_t\varphi$ by using \eqref{a35"}, i.e. the interface symbol is elliptic.

If the non-collinearity condition \eqref{non-collin}  fails, the interface symbol is {\it not elliptic}. This leads to a loss of ``control on the boundary". Then regardless of the fact that the constant coefficients linearized problem always satisfies the weak Lopatinskii condition, the corresponding variable coefficients problem is not unconditionally well-posed, and an {\it extra} condition is necessary for well-posedness. In our case such an extra condition is the Rayleigh-Taylor sign condition \eqref{39}.

Note that the Lopatinskii condition is usually being checked for the constant coefficients linearized problem resulting from the linearization about a  constant solution associated with the planar front $x_1=0$. Then, unlike \eqref{a35"}--\eqref{a37'new}, zero-order terms in $\varphi$ do not appear in the linearized boundary conditions (because the derivatives of constant solutions are zeros). If the uniform Lopatinskii condition holds, one can show that the front symbol is always elliptic (at least this was rigorously proved for shock waves \cite{Met}). Moreover, in this case the boundary conditions are robust against zero-order terms in $\varphi$. But, even if the Lopatinskii condition holds in a weak sense and the front symbol is elliptic, the zero-order terms in $\varphi$ again play a passive role in the analysis for variable coefficients. This was proved for weakly stable shock waves \cite{Col}, but also for characteristic discontinuities the known examples are vortex sheets  \cite{CS2} and current-vortex sheets \cite{T09}.  This is also so for our problem in the case when the non-collinearity condition \eqref{non-collin} holds, i.e. the front symbol is elliptic.\footnote{The fact that the weak Lopatinskii condition holds when the non-collinearity condition \eqref{non-collin} is satisfied follows from the a priori estimates proved in \cite{Tjde}.}

When, however, the front symbol is not elliptic, the zero-order terms in $\varphi$ may play a crucial role for the well-posedness of a frozen coefficients linearized free boundary problem. Below we first demonstrate this for such classical examples as the free boundary problems for the incompressible and compressible Euler equations with the vacuum boundary condition $p|_{\Gamma}=0$. Now we formulate a frozen coefficients version of problem \eqref{a34'new}, and then by analogy with it we just write down frozen coefficients problems for the mentioned problems for the Euler equations.

That is, we now freeze the coefficients in \eqref{a34'new}. Moreover, for technical simplicity and without loss of generality we below consider the case of a planar unperturbed interface by assuming that $\hat{\varphi}=0$. Freezing the coefficients at a point of the boundary $x_1=0$, we get the following constant coefficients problem:
\begin{subequations}\label{a34'new_2}
\begin{alignat}{2}
\displaystyle
& \widehat{\mathcal{A}}_0\partial_t{U}+\sum_{j=1}^{3}{\mathcal{E}}_{1j+1}\partial_j{U}+
\widehat{\mathcal C}'{U}=0
  &\qquad\mbox{in}&\ \mathbb{R}_+\times \mathbb{R}^3_+,\label{a34'anew_2}
\\
&  \nabla\times {\mathcal{H}}=0,\quad {\rm div}\,{\mathcal{H}}=0 &\qquad\mbox{in}&\ \mathbb{R}_+\times \mathbb{R}^3_-, \label{a36'bnew_2}
\\
&  \partial_t\varphi+\hat{v}_2\partial_2\varphi+\hat{v}_3\partial_3\varphi=v_1 +\hat{a}_0
\varphi,   & & \label{a35"_2}
\\
&  {q}=\widehat{\mathcal{H}}_2\mathcal{H}_2+\widehat{\mathcal{H}}_3\mathcal{H}_3+\hat{a} \varphi ,  & &\label{a35'dnew_2}
\\
&  {\mathcal{H}}_{1} =\widehat{\mathcal{H}}_2\partial_2\varphi  +\widehat{\mathcal{H}}_3\partial_3\varphi + \hat{a}_1\varphi &\qquad\mbox{on}&\ \mathbb{R}_+\times \{x_1=0\}\times\mathbb{R}^2,
\label{a37'new_2}
\\
&  (U,{\mathcal{H}},\varphi )= (U_0,{\mathcal{H}}_0,\varphi_0 ) & \qquad\mbox{for}&\ t=0,\label{a38'fnew_2}
\end{alignat}
\end{subequations}
where
\begin{equation}
\hat{a}=- [ \partial_1\hat{q}], \qquad \hat{a}_0=\partial_1\hat{v}_{1}\qquad\mbox{and}\quad \hat{a}_1=-\partial_1\widehat{\mathcal{H}}_1
\label{frcoeff}
\end{equation}
are constants as well as all the rest ``hat" values in \eqref{a34'new_2}, in particular, the matrices $\widehat{\mathcal{A}}_0$ and $\widehat{\mathcal C}'$ are constant coefficients matrices. Since below our goal will be the construction of particular exponential solutions, it is natural that here we drop the source term $\mathcal{F}$, introduce non-zero initial data and consider the problem for all times $t>0$.

In fact,  we can also omit the zero-order term $\widehat{\mathcal C}'{\mathcal{U}}$ in system \eqref{a34'anew_2} because its presence is not important for the process of construction of an Hadamard-type ill-posedness example. Indeed, this example is a sequence of solutions proportional to
\[
\exp \left\{ n\left(s t +\lambda^{\pm}x_1+ i(\omega ', x') \right)\right\}\qquad \mbox{for}\ \pm x_1>0,
\]
with  $n=1,2,3,\ldots$ and
\begin{equation}
\Re\,s>0,\quad \Re\,\lambda^+<0,\quad \Re\,\lambda^->0,\label{re_re}
\end{equation}
where $s$, $\lambda^+$ and $\lambda^-$ are complex constants, $\omega '=(\omega_2,\omega_3)$ and $\omega_{2,3}$ are real constants. Substituting these exponential solutions into system \eqref{a34'anew_2}, we get the dispersion relation
\[
\det \left(s \widehat{\mathcal{A}}_0 +\lambda^+{\mathcal{E}}_{12}+i\omega_2{\mathcal{E}}_{13}+i\omega_3 {\mathcal{E}}_{14}+ \frac{1}{n}\,\widehat{\mathcal C}'\right)=0
\]
from which the roots $\lambda^+= \lambda^+(s,\omega')$ can be found as a series in $1/n$:
\[
\lambda^+=\lambda^+_{(0)}+\frac{1}{n}\,\lambda^+_{(1)}+\frac{1}{n^2}\,\lambda^+_{(2)}+\ldots ,
\]
where $s=s_{0}+(1/n)s_{1}+\ldots $ and the leading term $\lambda^+_{(0)}$ does not depend on $\widehat{\mathcal C}'$. That is, without loss of generality we can drop the zero-order term $\widehat{\mathcal C}'{\mathcal{U}}$ in \eqref{a34'anew_2}.

System \eqref{a34'anew_2} with $\widehat{\mathcal C}'=0$ is the magnetoacoustics system, more precisely, it is the result of linearization of the MHD equations (in the half-space $x_1>0$) about a constant solution $\widehat{U}$ with $\hat{v}_1=0$ and $\widehat{H}_1=0$. Since $\hat{v}_2$ and $\hat{v}_3$ are now constants, we can apply a Galilean transformation so that the operator
$\partial_t+\hat{v}_2\partial_2 +\hat{v}_3\partial_3$
appearing in the magnetoacoustics system as well as in the boundary condition \eqref{a35"_2} becomes $\partial_t$. That is, without loss of generality we can suppose that $\hat{v}_2=\hat{v}_3=0$. The final form of the frozen coefficients problem in terms of the scalar potential \eqref{potent} reads:
\begin{subequations}\label{frozen}
\begin{alignat}{2}
\displaystyle
& \partial_t \bigl(q-\widehat{H}_2H_2-\widehat{H}_3H_3\bigr)+\hat{\rho}\hat{c}^2{\rm div}\,v =0, & &\label{frozen_MHD1}
\\
& \hat{\rho}\,\partial_tv -\ell^+H+\nabla q=0, & & \label{frozen_MHD2}
\\
& \partial_tH -\ell^+v+ \widehat{H}{\rm div}\,v =0   &\qquad\mbox{in}&\ \mathbb{R}_+\times \mathbb{R}^3_+,\label{frozen_MHD3}
\\
&  \triangle\xi=0 &\qquad\mbox{in}&\ \mathbb{R}_+\times \mathbb{R}^3_-, \label{frozen_Laplace}
\\
&  \partial_t\varphi=v_1 +\hat{a}_0
\varphi, \quad
 {q}=\ell^-\xi +\hat{a} \varphi ,  & &\label{frozen_bound2}
\\
&  \partial_1\xi =\ell^-\varphi + \hat{a}_1\varphi &\qquad\mbox{on}&\ \mathbb{R}_+\times \{x_1=0\}\times\mathbb{R}^2,
\label{frozen_bound3}
\end{alignat}
\end{subequations}
with some initial data, where $\hat{c}=1/\sqrt{\hat{\rho}_p}$ is the constant sound speed, $\widehat{H}=(0,\widehat{H}_2,\widehat{H}_3)$,
\[
\ell^+=\widehat{{H}}_2\partial_2+\widehat{{H}}_3\partial_3\quad\mbox{and}\quad
\ell^-=\widehat{\mathcal{H}}_2\partial_2+\widehat{\mathcal{H}}_3\partial_3.
\]
Here we do not include the autonomous equation $\partial_tS=0$ for the entropy perturbation $S$ in system \eqref{frozen_MHD1}--\eqref{frozen_MHD3} because without loss of generality one can assume that $S\equiv 0$.

\paragraph{Incompressible Euler equations with a vacuum boundary condition} Now, without going into details, by analogy with problem \eqref{frozen} we write down the frozen coefficients linearized problem
for the free boundary problem for the incompressible Euler equations with the vacuum boundary condition $p|_{\Gamma}=0$:
\begin{subequations}\label{froz2D}
\begin{alignat}{2}
\displaystyle
& \partial_tv +\nabla p=0,  \quad
{\rm div}\,v=0 &\qquad\mbox{in}&\ \mathbb{R}_+\times \mathbb{R}^3_+,\label{froz2Db}
\\
&  \partial_t\varphi=v_1 +\hat{a}_0
\varphi, \quad  p= \hat{a} \varphi  &\qquad\mbox{on}&\ \mathbb{R}_+\times \{x_1=0\}\times\mathbb{R}^2,
\label{froz2Dd}
\end{alignat}
\end{subequations}
where $p$ is the scaled pressure (divided by the constant density $\hat{\rho}$), the constant $\hat{a}_0$ is the same as in \eqref{frcoeff}, and $\hat{a}=- \partial_1\hat{p}$.

Substituting the sequence
\begin{equation}
\begin{pmatrix} p_n \\ v_n \end{pmatrix} =\begin{pmatrix} \bar{p}_n \\ \bar{v}_n \end{pmatrix}\exp \left\{ n\left(s t +\lambda x_1+ i(\omega ', x') \right)\right\}
\label{exp_sol}
\end{equation}
into equations \eqref{froz2Db}, we get the dispersion relation
\[
s^2(\lambda^2-\omega^2)=0
\]
giving the root $\lambda = -\omega$ with $\Re\lambda <0$ (cf. \eqref{re_re}), where $(\bar{p}_n , \bar{v}_n)$ is a constant vector and $\omega =|\omega'|$. Without loss of generality we may assume that $\omega =1$, i.e. $\lambda =-1$. Then from the first equation in \eqref{froz2Db} we have
\begin{equation}
\bar{v}_{1n}=\frac{\bar{p}_{n}}{s},
\label{vbar}
\end{equation}
where $\bar{v}_{1n}$ is the first component of the vector $\bar{v}_n$.

In view of \eqref{vbar}, substituting \eqref{exp_sol} and
\begin{equation}
\varphi_n=\bar{\varphi}_n\exp \left\{ n\left(s t + i(\omega ', x') \right)\right\}
\label{exp_sol'}
\end{equation}
into the boundary conditions \eqref{froz2Dd}, we get for the constants $\bar{p}_{n}$ and $\bar{\varphi}_n$ the algebraic system
\[
\begin{pmatrix}
ns-\hat{a}_0 & \displaystyle\frac{1}{s}\\[6pt]
\hat{a} & 1
\end{pmatrix}
\begin{pmatrix}
\bar{\varphi}_n\\
\bar{p}_{n}
\end{pmatrix}=0
\]
which has a non-zero solution if
\begin{equation}
ns^2-\hat{a}_0s-\hat{a}=0.
\label{eq_s}
\end{equation}
For big enough $n$ and $\hat{a}\neq 0$, the last equation has the ``unstable" root
\begin{equation}
s=\frac{\hat{a}_0}{2n}+\sqrt{\frac{\hat{a}}{n}+\frac{\hat{a}_0^2}{4n^2}}=\frac{\sqrt{\hat{a}}}{\sqrt{n}}+\mathcal{O}\left(\frac{1}{n} \right)
\label{root_s}
\end{equation}
if and only if $\hat{a}>0$, i.e. if and only if the Rayleigh-Taylor sign condition
\begin{equation}
\partial_1\hat{p} >0
\label{RT_Eul}
\end{equation}
fails.

It is {\it very important} that root \eqref{root_s} for $\hat{a}>0$ after the substitution into the exponential sequences \eqref{exp_sol} and \eqref{exp_sol'} gives an infinite growth in time of order $\sqrt{n}$ as $n\rightarrow \infty$ for any fixed (even very small) $t>0$. This is not only usual exponential instability but {\it ill-posedness} (violent instability). Note that for $\hat{a}=0$  equation \eqref{eq_s} has the ``unstable" root $s=\hat{a}_0/n$ for $\hat{a}_0>0$ but it just gives exponential instability but not ill-posedness.

That is, the above simple normal modes analysis show that Rayleigh-Taylor instability  can be trivially detected as ill-posedness on the level of frozen coefficients. Note that a rigorous proof of the ill-posedness of the original nonlinear free boundary problem for the incompressible Euler equations with a vacuum boundary condition $p|_{\Gamma}=0$ under the violation of the Rayleigh-Taylor sign condition $(\partial p/\partial N)|_{\Gamma}\leq -\epsilon<0$ is, of course, a difficult mathematical problem (see \cite{Ebin}).

\paragraph{Compressible Euler equations with a vacuum boundary condition} For the case of compressible liquid ($\rho|_{\Gamma}>0$) the above simple calculations are just a little bit more involved. The ``compressible" counterpart of problem \eqref{froz2D} reads (cf. \eqref{frozen}):
\begin{subequations}\label{frozen_Eul}
\begin{alignat}{2}
\displaystyle
& \partial_t p+\hat{\rho}\hat{c}^2{\rm div}\,v =0,\quad
\hat{\rho}\,\partial_tv +\nabla p=0, &\qquad\mbox{in}&\ \mathbb{R}_+\times \mathbb{R}^3_+,\label{frozen_1}
\\
&  \partial_t\varphi=v_1 +\hat{a}_0
\varphi, \quad  p= \hat{a} \varphi &\qquad\mbox{on}&\ \mathbb{R}_+\times \{x_1=0\}\times\mathbb{R}^2,\label{frozen_bound}
\end{alignat}
\end{subequations}
where the constants $\hat{a}_0$ and $\hat{a}$ are the same as in \eqref{froz2D}, and $\hat{c}$ is the constant sound speed. For system \eqref{frozen_1} we easily find the dispersion relation
\[
s^2(\hat{c}^2\lambda^2-\hat{c}^2\omega^2-s^2)=0
\]
giving the root
\begin{equation}
\lambda = -\sqrt{1+\left(\frac{s}{\hat{c}}\right)^2},
\label{lam}
\end{equation}
where without loss of generality we again assume that $\omega =1$. Then from the second equation in \eqref{frozen_1} we have
\begin{equation}
\bar{v}_{1n}=\frac{\bar{p}_{n}}{\hat{\rho}s}\sqrt{1+\left(\frac{s}{\hat{c}}\right)^2}.
\label{vbar'}
\end{equation}

Substituting \eqref{exp_sol} and \eqref{exp_sol'} into the boundary conditions \eqref{frozen_bound} and using \eqref{vbar'}, we obtain for the constants $\bar{p}_{n}$ and $\bar{\varphi}_n$ an algebraic system which has a non-zero solution if (cf. \eqref{eq_s})
\begin{equation}
ns^2-\hat{a}_0s-\frac{\hat{a}}{\hat{\rho}}\sqrt{1+\left(\frac{s}{\hat{c}}\right)^2}=0.
\label{eq_s'}
\end{equation}
As for the case of incompressible fluid, we see that for $\hat{a}=0$ the final equation for the frequency $s$ has the ``unstable" root $s=\hat{a}_0/n$ for $\hat{a}_0>0$, but this just means exponential instability but not ill-posedness. Considering now the case $\hat{a}=-\partial_1\hat{p}\neq 0$ and expanding \eqref{eq_s'} in powers of $1/\sqrt{n}$, we find the root (cf. \eqref{root_s})
\[
s=\sqrt{\frac{\hat{a}}{\hat{\rho}}}\,\frac{1}{\sqrt{n}}+\mathcal{O}\left(\frac{1}{n} \right)
\]
which gives ill-posedness if and only if the Rayleigh-Taylor sign condition \eqref{RT_Eul} fails.

\subsection{Hadamard-type ill-posedness example for the frozen coefficients plasma-vacuum interface problem}

Before proceeding to our frozen coefficients problem \eqref{frozen} we consider the simpler case of incompressible fluid for which the linearized plasma-vacuum interface problem was studied in \cite{MoTraTre-vacuum}. The ``incompressible" counterpart of problem \eqref{frozen} reads:
\begin{subequations}\label{frozen_inc}
\begin{alignat}{2}
\displaystyle
& \partial_tv -\ell^+H+\nabla q=0,\quad {\rm div}\,v =0, & & \label{frozen_inc1}
\\
& \partial_tH -\ell^+v =0   &\qquad\mbox{in}&\ \mathbb{R}_+\times \mathbb{R}^3_+,\label{frozen_inc2}
\\
&  \triangle\xi=0 &\qquad\mbox{in}&\ \mathbb{R}_+\times \mathbb{R}^3_-, \label{frozen_inc3}
\\
&  \partial_t\varphi=v_1 +\hat{a}_0
\varphi,\quad   {q}=\ell^-\xi +\hat{a} \varphi ,  & &\label{frozen_bound2-inc}
\\
&  \partial_1\xi =\ell^-\varphi + \hat{a}_1\varphi &\qquad\mbox{on}&\ \mathbb{R}_+\times \{x_1=0\}\times\mathbb{R}^2,
\label{frozen_bound3-inc}
\end{alignat}
\end{subequations}
where the equations are written in a suitable scaled form for which $\hat{\rho}=1$.

We seek sequences of exponential solutions to problem \eqref{frozen_inc} in the form
\begin{equation}
\begin{pmatrix} q_n \\ v_n \\ H_n\end{pmatrix} =\begin{pmatrix} \bar{q}_n \\ \bar{v}_n \\ \bar{H}_n \end{pmatrix}\exp \left\{ n\left(s t +\lambda^+ x_1+ i(\omega ', x') \right)\right\},\qquad x_1>0,
\label{exp_solMHD}
\end{equation}
\begin{equation}
\xi_n =\bar{\xi}_n \exp \left\{ n\left(s t +\lambda^- x_1+ i(\omega ', x') \right)\right\},\qquad x_1<0,
\label{exp_solMHD'}
\end{equation}
\begin{equation}
\varphi_n=\bar{\varphi}_n\exp \left\{ n\left(s t + i(\omega ', x') \right)\right\},
\label{exp_sol"}
\end{equation}
where, as above, all the ``bar" values are complex constants and $s$ and $\lambda^{\pm}$ should satisfy \eqref{re_re}. As for the incompressible Euler equations, we easily find $\lambda^+=-1$ either from the dispersion relation for system \eqref{frozen_inc1}--\eqref{frozen_inc2} or just from the Laplace equation $\triangle q=0$ following from this system (we again assume without loss of generality that $\omega=1$). Quite analogously from the Laplace equation \eqref{frozen_inc3} we find $\lambda^-=1$.

The result of substitution of \eqref{exp_solMHD} with $\lambda^+=1$ into the first equations in \eqref{frozen_inc1} and \eqref{frozen_inc2},
\[
s\bar{v}_{1n}-i\hat{w}^+\bar{H}_{1n}-\bar{q}_{1n}=0,\quad s\bar{H}_{1n}-i\hat{w}^+\bar{v}_{1n}=0,
\]
implies
\begin{equation}
\bar{v}_{1n}=\frac{s}{s^2+(\hat{w}^+)^2}\,\bar{q}_{1n},
\label{qn}
\end{equation}
where $\hat{w}^+=\widehat{H}_2\omega_2+\widehat{H}_3\omega_3$. Substituting \eqref{exp_solMHD}--\eqref{exp_sol"} with $\lambda^{\pm}=\mp1$ into the boundary conditions \eqref{frozen_bound2-inc}, \eqref{frozen_bound3-inc} and using \eqref{qn}, we obtain the algebraic system
\[
\begin{pmatrix}
ns-\hat{a}_0 & \displaystyle \frac{s}{s^2+(\hat{w}^+)^2} &0 \\[6pt]
\hat{a} &1 & in\hat{w}^- \\
\hat{a}_1+in\hat{w}^- & 0 & -n
\end{pmatrix}
\begin{pmatrix}
\bar{\varphi}_n\\
\bar{q}_{n}\\
\bar{\xi}_{n}
\end{pmatrix}
=0,
\]
with $\hat{w}^-=\widehat{\mathcal{H}}_2\omega_2+\widehat{\mathcal{H}}_3\omega_3$, which has a non-zero solution if
\begin{equation}
s\left\{n\left(s^2+(\hat{w}^+)^2+(\hat{w}^-)^2\right) -(\hat{a}+i\hat{w}^-\hat{a}_1)\right\}-\hat{a}_0\left(s^2+(\hat{w}^+)^2\right)=0.
\label{eq-for-s}
\end{equation}

We will go ahead and say that, exactly as for problems \eqref{froz2D} and \eqref{frozen_Eul}, the coefficient $\hat{a}_0$ plays no role for the existence/nonexistence of roots $s$ giving ill-posedness. Therefore, we first assume that $\hat{a}_0=0$. Then, ignoring the neutral mode $s=0$, from \eqref{eq-for-s} we obtain
\begin{equation}
s^2=-\left((\hat{w}^+)^2+(\hat{w}^-)^2\right)+\frac{\hat{a}+i\hat{w}^-\hat{a}_1}{n}.
\label{root-s2}
\end{equation}
Recall that $\omega =1$, i.e. $\omega'\neq  0$ (for the 1D case $\omega' =0  $ we get $\lambda^{\pm} =0$). Then
\[
\widehat{W}=(\hat{w}^+)^2+(\hat{w}^-)^2=0
\]
if and only if $\hat{w}^+=\hat{w}^-=0$, i.e. when the vectors $\widehat{H}'$ and $\widehat{\mathcal{H}}'$ are both perpendicular to $\omega'$ or they are both zero or one of them is zero and another one is perpendicular to $\omega'$. In other words, $\widehat{W}=0$ for some $\omega'$ if and only if the vectors $\widehat{H}'$ and $\widehat{\mathcal{H}}'$ are collinear.

If $\widehat{W}\neq 0$, then the leading term in the right-hand side in \eqref{root-s2} is $-\widehat{W}$. In this case
\begin{equation}
s=\pm i\widehat{W}^{1/2}+\mathcal{O}\left(\frac{1}{n}\right) .
\label{root-s}
\end{equation}
One of these roots $s$ can be ``unstable", i.e. $\Re s>0$, but anyway $\Re s$ is of order $1/n$, i.e. it does not give an infinite growth of the exponential solutions \eqref{exp_solMHD}--\eqref{exp_sol"} for a fixed time $t> 0$ as $n\rightarrow \infty$. If the vectors $\widehat{H}'$ and $\widehat{\mathcal{H}}'$ are collinear, we choose $\omega'$ such that  $\hat{w}^+=\hat{w}^-=0$. Then, from \eqref{root-s2} we find the root
\[
s=\frac{\sqrt{\hat{a}}}{\sqrt{n}}
\]
giving an ill-posedness example if and only if
\[
\hat{a}=- [ \partial_1\hat{q}] >0,
\]
i.e. when the Rayleigh-Taylor sign condition $[ \partial_1\hat{q}] >0$ fails.

Let us now $\hat{a}_0\neq 0$. Expanding \eqref{eq-for-s} in powers of $1/\sqrt{n}$, for $\widehat{W}\neq 0$ we again find the roots $s$ in form \eqref{root-s} which does not imply ill-posedness. For collinear vectors $\widehat{H}'$ and $\widehat{\mathcal{H}}'$ we choose $\omega'$ such that $\hat{w}^+=\hat{w}^-=0$ ($\widehat{W}=0$). Then, ignoring the neutral mode $s=0$, from \eqref{eq-for-s} we obtain equation \eqref{eq_s} (with $\hat{a}=- [ \partial_1\hat{q}]$) which has root \eqref{root_s} giving an ill-posedness example if and only if $[ \partial_1\hat{q}] <0$.

\paragraph{Proof of Theorem \ref{t2.2}}
The process of construction of an Hadamard-type ill-posedness example for problem \eqref{frozen} is very close to that for its ``incompressible" counterpart \eqref{frozen_inc} but calculations are just a little bit more involved. Substituting \eqref{exp_solMHD} into the magnetoacoustics system \eqref{frozen_MHD1}--\eqref{frozen_MHD3} and omitting technical calculations, we get a dispersion relation giving the only root\footnote{In fact, we can just use calculations for compressible current sheets which are current-vortex sheets with no jump in the velocity (see \cite{T05} and references therein).}
\begin{equation}
\lambda^+=-\sqrt{1+\frac{s^4}{(\hat{c}^2+\hat{c}_{\rm A}^2)s^2+\hat{c}^2(\widetilde{w}^+)^2}}
\label{lam+}
\end{equation}
matching properties \eqref{re_re}, where $\hat{c}_{\rm A}=|\widehat{H}|/\sqrt{\hat{\rho}}$ is the Alfv\'{e}n velocity, $\widetilde{w}^+=\hat{w}^+/\sqrt{\hat{\rho}}$, and without loss of generality we assume that $\omega =1$. Clearly, the root $\lambda^+$ coincides with \eqref{lam} for $\widehat{H}=0$. From the Laplace equation \eqref{frozen_Laplace} we easily find $\lambda^-=1$.

Substituting \eqref{exp_solMHD} with $\lambda^+$ given by \eqref{lam+} into the first equations in \eqref{frozen_MHD2} and \eqref{frozen_MHD3}, we obtain
\begin{equation}
\bar{v}_{1n}=\frac{\bar{q}_{1n}s}{\hat{\rho}s^2+(\hat{w}^+)^2}\,\sqrt{1+\frac{s^4}{(\hat{c}^2+\hat{c}_{\rm A}^2)s^2+\hat{c}^2(\widetilde{w}^+)^2}}.
\label{qn-comp}
\end{equation}
Taking then \eqref{lam+}, $\lambda^-=1$ and \eqref{qn-comp} into account and substituting \eqref{exp_solMHD}--\eqref{exp_sol"} into the boundary conditions \eqref{frozen_bound2}, \eqref{frozen_bound2}, we arrive at the algebraic system
\[
\begin{pmatrix}
ns-\hat{a}_0 & \quad \displaystyle \frac{s}{\hat{\rho}s^2+(\hat{w}^+)^2}\,\sqrt{1+\frac{s^4}{(\hat{c}^2+\hat{c}_{\rm A}^2)s^2+\hat{c}^2(\widetilde{w}^+)^2}}\quad &0 \\[9pt]
\hat{a} &1 & in\hat{w}^- \\
\hat{a}_1+in\hat{w}^- & 0 & -n
\end{pmatrix}
\begin{pmatrix}
\bar{\varphi}_n\\
\bar{q}_{n}\\
\bar{\xi}_{n}
\end{pmatrix}
=0
\]
which has a non-zero solution if (cf. \eqref{eq-for-s})
\begin{multline}
s\Biggl\{n\left(\hat{\rho}s^2+(\hat{w}^+)^2+(\hat{w}^-)^2\sqrt{1+\frac{s^4}{(\hat{c}^2+\hat{c}_{\rm A}^2)s^2+\hat{c}^2(\widetilde{w}^+)^2}}\,\right) \\-(\hat{a}+i\hat{w}^-\hat{a}_1)\sqrt{1+\frac{s^4}{(\hat{c}^2+\hat{c}_{\rm A}^2)s^2+\hat{c}^2(\widetilde{w}^+)^2}}\,\Biggr\}-\hat{a}_0\left(\hat{\rho}s^2+(\hat{w}^+)^2\right)=0.
\label{eq-for-s'}
\end{multline}

Expanding equation \eqref{eq-for-s'} in powers of $1/\sqrt{n}$, we seek its roots as series
\[
s=s_0+\frac{1}{\sqrt{n}}\,s_1+\frac{1}{n}\,s_2+\frac{1}{n\sqrt{n}}\,s_3+\ldots \;.
\]
One can show that if $s_0\neq 0$, then $s_1=0$. Hence, a root $s$ with $\Re s_0 =0$ which could be, in principle, an ``unstable" root with $\Re s>0$ cannot give ill-posedness because $\Re s =\mathcal{O} (1/n)$. That is,  we should check whether there are roots $s$ with $\Re s_0 >0$. A non-zero $s_0$ must satisfy the equation
\begin{equation}
\hat{\rho}s_0^2+(\hat{w}^+)^2+(\hat{w}^-)^2\sqrt{1+\frac{s_0^4}{(\hat{c}^2+\hat{c}_{\rm A}^2)s_0^2+\hat{c}^2(\widetilde{w}^+)^2}}=0.
\label{s_0}
\end{equation}
Instead of a direct analysis of this equation we can just refer to the study in \cite{Tjde} of the frozen coefficients linearized problem by the energy method. One can check that the arguments in \cite{Tjde} giving an a priori $L^2$ estimate are still valid for the case when $\hat{a}=\hat{a}_0=\hat{a}_1=0$. For this case resulting from the linearization of the nonlinear problem about a constant solution  $(\widehat{U},\widehat{\mathcal{H}},0)$ spectral analysis gives the equation for $s$ which coincides with \eqref{s_0}. But, since the presence of an a priori $L^2$ estimate proves the fulfilment of the Lopatinskii condition, i.e. the non-existence of unstable modes with $\Re s >0$, equation \eqref{s_0} cannot have roots with $\Re s_0 >0$.

That is, ill-posedness can happen if only $s_0=0$. It follows from \eqref{s_0} that $s_0=0$ if and only if $\hat{w}^+=\hat{w}^-=0$. Hence, as for incompressible MHD, ill-posedness can happen if only the vectors $\widehat{H}'$ and $\widehat{\mathcal{H}}'$ are collinear. Choosing then $\omega'$ such that $\hat{w}^+=\hat{w}^-=0$, from \eqref{eq-for-s'} we get the equation (cf. \eqref{eq_s'})
\begin{equation}
ns^2-\hat{a}_0s-\frac{\hat{a}}{\hat{\rho}}\,\sqrt{1+\frac{s^2}{\hat{c}^2+\hat{c}_{\rm A}^2}}=0.
\label{eq-for-s"}
\end{equation}
As for equation \eqref{eq_s'}, for $\hat{a}=0$ we have the ``unstable" root $s=\hat{a}_0/n$ if $\hat{a}_0>0$, but the existence of such a root does not imply ill-posedness. Assuming that $\hat{a}\neq 0$ and expanding \eqref{eq-for-s"} in powers of $1/\sqrt{n}$, we find the root \[
s=\sqrt{\frac{\hat{a}}{\hat{\rho}}}\,\frac{1}{\sqrt{n}}+\mathcal{O}\left(\frac{1}{n} \right)
\]
which gives ill-posedness if and only if $\hat{a}<0$, i.e. \eqref{antiRT} holds. The proof of Theorem \ref{t2.2} is thus complete.

\begin{remark}
{\rm
Probably from the physical point of view the ``mathematical" requirement of the ellipticity of the interface symbol appearing for our problem as the non-collinearity condition is not quite clear. The above simple calculations of the normal modes analysis clarify the physical sense of the non-collinearity condition. Indeed, if the tangential magnetic fields $\widehat{H}'$ and $\mathcal{\widehat{H}}'$ are not collinear, then they cannot be both collinear with the tangential wave vector $\omega'$ (to be exact, the tangential component of the wave vector is $n\omega'$, see \eqref{exp_solMHD}--\eqref{exp_sol"}).\footnote{Since the first components of the constant vectors $\widehat{H}$ and $\mathcal{\widehat{H}}$ are zeros, we can say the same about the magnetic fields and the (3D) wave vector in a general direction.}
}
\label{r4.1}
\end{remark}

\section{Open problems}
\label{s5}

The results obtained for the plasma-vacuum interface problem \eqref{4}, \eqref{6}, \eqref{9}--\eqref{8'} in \cite{ST} and in this paper (Theorems \ref{t2.1} and \ref{t2.2}) encourage us to prove the local-in-time existence and uniqueness theorem in suitable Sobolev spaces and under the hyperbolicity conditions \eqref{5} and suitable compatibility conditions satisfied for the initial data \eqref{8'}, provided that at each point of the initial interface $\Gamma (0)$ the plasma density is strictly positive and the Rayleigh-Taylor sign condition \eqref{RT} is satisfied at all those points of $\Gamma (0)$  where the non-collinearity condition \eqref{non-coll} fails. Before the proof of such a most general theorem one can try to get a little bit weaker result by assuming that the Rayleigh-Taylor sign condition is satisfied at each point of the initial interface without any restrictions on the relative position of the plasma and vacuum magnetic fields on $\Gamma (0)$. The first step towards the proof of the corresponding (``weakened") theorem for the nonlinear problem \eqref{4}, \eqref{6}, \eqref{9}--\eqref{8'} should be an a priori estimate for the linearized problem \eqref{34'new} under the Rayleigh-Taylor sign condition \eqref{39} satisfied for the basic state. That is, we need to remove the collinearity condition \eqref{collin} from the list of our assumptions in Theorem \ref{t2.1}. The more so that the form of such a condition itself is unstable with respect to time evolution, i.e. it cannot be guaranteed to be satisfied for the nonlinear problem for short times if it was true at the first moment.

At the same time, if the Rayleigh-Taylor sign condition \eqref{39} is supplemented with the non-collinearity condition \eqref{non-collin} for the basic state, then we can derive for the linearized problem the same $L^2$ a priori estimate as in Theorem \ref{t2.1} by using the same arguments as we used for the case when \eqref{39} is supplemented with the collinearity condition \eqref{collin} (see Remark \ref{r3.1}). However, it is still an open problem how to get an a priori estimate if the unperturbed plasma and vacuum magnetic fields are collinear in some regions of the boundary $\omega_T$ and/or even in some isolated points of $\omega_T$. At last, we note that the proof of the existence of solutions to the linearized problem for which the basic state satisfies the Rayleigh-Taylor sign condition \eqref{39} is also an open problem for the future work.

\vspace*{7mm}

{\bf E-mail:} trakhinin@mail.ru

\end{document}